\newcommand\myshade{85}
\colorlet{mylinkcolor}{blue}
\colorlet{mycitecolor}{red}
\colorlet{myurlcolor}{Aquamarine}
\newtheorem{thm}{Theorem}[section]
\crefname{thm}{Theorem}{Theorems}
\newtheorem{cor}[thm]{Corollary}
\newtheorem{prop}[thm]{Proposition}
\crefname{prop}{Proposition}{Propositions}
\newtheorem{lem}[thm]{Lemma}
\crefname{lem}{Lemma}{Lemmas}
\newtheorem{clm}[thm]{Claim}
\newtheorem{conj}[thm]{Conjecture}
\newtheorem{quest}[thm]{Question}
\newtheorem{defn}[thm]{Definition}
\theoremstyle{definition}
\crefname{defn}{Definition}{Definitions}
\crefname{prin}{Principle}{Principles}
\newtheorem{exmp}[thm]{Example}
\theoremstyle{definition}
\newtheorem{rmk}[thm]{Remark}
\newtheorem*{ack*}{Acknowledgements}
\newcommand{\Gr}{\operatorname{Gr}}
\newcommand{\rk}{{\operatorname{rk}}}
\newcommand{\crk}{{\operatorname{crk}}}
\title{Log-concavity of matroid h-vectors and mixed Eulerian numbers}
\author{Andrew Berget,  Hunter Spink, Dennis Tseng}
\date{}
\begin{document}
\begin{abstract}For any matroid $M$, we compute the Tutte polynomial $T_M(x,y)$ using the mixed intersection numbers of certain classes in the combinatorial Chow ring $A^\bullet(M)$ arising from hypersimplices. Using the mixed Hodge-Riemann relations, we deduce a strengthening of the log-concavity of the $h$-vector of a matroid complex, improving on an old conjecture of Dawson. \end{abstract}

\maketitle

\section{Introduction}
Matroids are a common generalization of graphs and vector configurations, encoding a related notion of ``independence''.
In their celebrated paper, Adiprasito, Huh, and Katz \cite{AHK18}
showed that the coefficients of the characteristic polynomial $\chi_M(q)$  form a log-concave
sequence for any matroid $M$, proving a conjecture of Rota-Heron-Welsh
\cite{R71}. Building on previous work \cite{M93,FY04,HK12},
log-concavity follows from two ingredients:
\begin{enumerate}
\item A ``combinatorial Chow ring'' $A^{\bullet}(M)$ associated to $M$ satisfying
  Hodge-Riemann type relations analogous to the Hodge index theorem
  for surfaces.
\item A description of $\chi_M(q)$ in terms of mixed intersections of
  ``nef’’ divisors $\alpha_M$ and $\beta_M$.
\end{enumerate}
There has been progress in simplifying and extending the first
ingredient
\cite{BES19,BHMPW20,ADH20}. However, there has been comparatively little
research trying to compute new log-concavity statements from the
established Hodge-Riemann relations in $A^\bullet(M)$. We will focus on expanding on the second ingredient, and the resulting
log-concavity statements.


 Our main result concerns a conjecture of Dawson {\cite{Dawson}} on the independence complex $\operatorname{IN}(M)$, which has as its simplices the independent sets of $M$. A classical description of the $h$-vector $h_0,\ldots,h_{\rk(M)}$ of $\operatorname{IN}(M)$ (see \Cref{cor:nointernal}) establishes that this sequence is nonnegative, and has no internal zeros (meaning if $i<j<k$ and $h_i,h_k\ne 0$, then $h_j\ne 0$).
\begin{conj}[{\cite{Dawson}}]\label{conj:Dawson}
The $h$-vector $h_0,\ldots,h_{\rk(M)}$ of $\operatorname{IN}(M)$ is a log-concave sequence, meaning $h_i^2\ge h_{i-1}h_{i+1}$ for $0<i<\rk(M)$. In particular, the sequence is unimodal, meaning there exists $k$ such that $h_0\le \cdots \le h_k$ and $h_k\ge h_{k+1}\ge \cdots \ge h_{\rk(M)}$.
\end{conj}
Huh \cite{HuhAdvances} established this conjecture for matroids realizable over characteristic $0$, which includes graphic matroids. Using the mixed Hodge-Riemann relations in $A^\bullet(M)$ \cite[Theorem 8.9]{AHK18}, we strengthen Dawson's conjecture.
\begin{thm}\label{preintrothm}
Let $M$ be an arbitrary matroid with rank $\rk(M)$ and corank $\crk(M)$. Let $h_0,\ldots,h_{\rk(M)}$ be the $h$-vector of the matroid complex of $M$. Then when $\crk(M)\ge 2$ we have
$$h_i^2-h_{i-1}h_{i+1}\ge \frac{1}{\crk(M)-1}(h_i-h_{i-1})(h_{i+1}-h_i).$$
In particular, for any matroid $M$ we have $h_i^2\ge h_{i-1}h_{i+1}$ for all $i$, resolving \Cref{conj:Dawson}.
\end{thm}

We make a few remarks about this theorem.
\begin{enumerate}
\item In the trivial cases where $\crk(M)<2$, we note that $\crk(M)=0$ implies the $h$-vector is $(0,\ldots,0,1)$ and $\crk(M)=1$ implies it is equal to a sequence of the form $(0,\ldots,0,1,\ldots,1,0,\ldots,0)$.
    \item One could deduce log-concavity of the $h$-vector sequence from the ``strengthened log-concavity'' by noting that adding loops to $M$ increases $\crk(M)$ but preserves the $h$-vector. However it turns out that for $a,b,c$ a sequence of nonnegative numbers with no internals zeros, the inequality $b^2-ac \ge \lambda (b-a)(c-b)$ for any $\lambda \le 1$ implies $b^2-ac \ge 0$ directly (see \Cref{cor:lc}).
    \item As this result shows that $h_i$ are unimodal, the ``strengthened log-concavity'' inequality implies that for nonzero $h_{i-1},h_i,h_{i+1}$ we have strict log-concavity $h_i^2>h_{i-1}h_{i+1}$ except when $h_{i-1}=h_i=h_{i+1}=\max\{h_j\}$, which happens for infinitely many rank $2$ matroids and many higher rank matroids \cite{DLKK12}. However, the inequality in \Cref{preintrothm} can be an equality even when $(h_i-h_{i-1})(h_{i+1}-h_i)>0$, showing that the result is in some sense sharp.
    \item The resolution of Dawson's conjecture was announced by Ardila, Denham and Huh in 2017 (see \cite{ardilaGeometry}). The present paper appeared concurrently to \cite{ADH20}, which develops a new ``conormal Chow ring'' associated to  what they call a ``bipermutohedral fan''. Their paper also establishes a conjecture of Brylawski on the log-concavity of the $h$-vector of the no broken circuit complex of $M$ through their conormal Chow ring. The combinatorics of the bipermutohedral fan were later simpified and extended in \cite{BEST} through the framework of ``tautological classes of matroids''.
\end{enumerate}

For $G=(E,V)$ a graph and $M=M(G)^*$ the cocycle matroid of $G$ (the dual to the associated graphic matroid), the result can be phrased in terms of the reliability polynomial $R_G(p)$. Recall that $R_G(p)$ is the probability that if each edge of $G$ is discarded with probability $1-p$ that each of the $\kappa(G)$ connected components of $G$ remain connected. The $h$-sequence of the reliability polynomial is defined by
$$R_G(p)=p^{\crk(M)}\sum_{i=0}^{\rk(M)} h_i(1-p)^i$$
where $\crk(M)=|V|-\kappa(G)$ and  $\rk(M)=|E|-|V|+\kappa(G)$. \Cref{preintrothm} appears to be new even in this case, strengthening the result in \cite{HuhAdvances}.
\subsection{Matroid invariants and hypersimplices}
Let $M$ be a rank $r+1$ loopless matroid on ground set $\{0,\ldots,n\}$.
The ring $A^\bullet(M)$ is equipped with a degree map $\deg:A^r(M)\to \mathbb{Z}$, such that for each generalized permutahedron $P\subset \mathbb{R}^{n+1}$ (an integral polytope all of whose edges are parallel to $e_i-e_j$ for various $i,j$), there is an associated element $L\in A^1(M)_{\mathbb{R}}$, and if $L_1,\ldots,L_r$ are associated to generalized permutohedra $P_1,\ldots,P_n$, then $\deg_{A^\bullet(M)}(L_1\cdots L_r)$ is a nonnegative integer associated to the matroid $M$.

The mixed Hodge-Riemann relations in $A^\bullet(M)$ then give us log-concavity statements.
\begin{thm}[{\cite[Theorem 8.9]{AHK18}}]
Let $L_1,\ldots,L_{r-2},H,K\in A^1(M)_{\mathbb{R}}$ be elements associated to generalized permutahedra. Let $\Omega=L_1\cdots L_{r-2}$. Then $$\deg_{A^\bullet(M)}(\Omega\cdot H^2),\deg_{A^\bullet(M)}(\Omega \cdot HK),\deg_{A^\bullet(M)}(\Omega \cdot K^2)$$
is a log-concave sequence.
\end{thm}
A natural way to ensure these degrees are matroid invariants (only depending on the isomorphism type of $M$) is to only consider generalized permutahedra invariant under the permutation action on $\mathbb{R}^{n+1}$.

For example, the classes $\alpha,\beta$ used in \cite{HK12,H14,AHK18} to establish log-concavity for the coefficients of the characteristic polynomial $\chi_M(q)$ correspond to the simplices
$P_{n,n+1}=[0,1]^{n+1}\cap \{\sum x_i=n\}$ and $P_{1,n+1}=[0,1]^{n+1}\cap \{\sum x_i=1\}$ respectively. It turns out that every $S_{n+1}$-invariant generalized permutahedron is a Minkowski sum of nonnegative scalings of the $n$-dimensional hypersimplices $$P_{k,n+1}=[0,1]^{n+1}\cap \{\sum x_i=k\},$$
and writing $\gamma_i\in A^1(M)$ for the element associated to $P_{i,n+1}$, the class associated to any such polytope is therefore a non-negative linear combination of $\beta=\gamma_1,\gamma_2,\ldots,\gamma_{n-1},\gamma_n=\alpha$. By using linearity of the degree, the search for invariants of matroids and associated log-concavity properties arising in this way is equivalent to understanding the following question.
\begin{quest}
\label{quest:whatinvariants}
What matroid invariants arise as the numbers
$$\deg_{A^\bullet(M)}(\gamma_1^{a_1}\cdots \gamma_n^{a_n})$$
for $a_1,\ldots,a_n\ge 0$ with $\sum a_i=r$?
\end{quest}
Recall that the Tutte polynomial $T_M(x,y)$ is the universal deletion contraction invariant for matroids \cite{Ardila}, which has as one of its many interesting specializations $\chi_M(q)$.
When the multiplicity support in $\gamma_1^{a_1}\cdots \gamma_n^{a_n}$ is an interval in $\{1,\ldots,n\}$, we answer this question by computing these degrees via the Tutte polynomial evaluation $T_M(1,y)$ and the \emph{mixed Eulerian numbers} $A_{b_1,\ldots,b_{r}}$ of Postnikov
\cite[Section~16]{Postnikov}.
\begin{thm}
\label{thm:mixedintro}
For $a_{1},\ldots,a_k\ge 1$ with $\sum a_i=r$, define the polynomial $A_{a_1,\ldots,a_k}(y)=\sum_{i=0}^{r-k}A_{0^{i},a_{1},\ldots,a_k,0^{r-k-i}}y^i$. Then we have
\begin{align*}T_M(1,y)&=\frac{1}{A_{a_1,\ldots,a_k}(y)}\sum_{i=0}^{n-k} \deg_{A^\bullet(M)}(\gamma_{i+1}^{a_{1}}\ldots \gamma_{i+k}^{a_k}) y^i.\end{align*}
\end{thm}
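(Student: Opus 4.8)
The plan is to move the computation from $A^\bullet(M)$ into the Chow ring of the permutohedral toric variety, where the mixed Eulerian numbers live by definition, and then prove the resulting identity of matroid invariants by a valuativity argument that reduces it to an explicit check on Schubert matroids. \textbf{Step 1 (transfer to the permutohedral variety).} Let $B$ be the Boolean matroid on $\{0,\dots,n\}$, so that $A^\bullet(B)$ is the Chow ring of the $n$-dimensional permutohedral variety $X_{A_n}$, the class $\alpha_{i,B}$ is the $i$-th hypersimplex divisor --- the pullback of $\mathscr O(1)$ from $\Gr(i,n+1)$ --- and, by Postnikov's definition, $\deg_{A^\bullet(B)}(\alpha_{1,B}^{c_1}\cdots\alpha_{n,B}^{c_n})=A_{c_1,\dots,c_n}$ whenever $\sum_i c_i=n$. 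The map in the statement realizes $\alpha_{i,M}$ as the image of $\alpha_{i,B}$ under the canonical surjection $\pi_M\colon A^\bullet(B)\twoheadrightarrow A^\bullet(M)$; combining this with the projection formula $\deg_{A^\bullet(M)}\pi_M(\xi)=\deg_{A^\bullet(B)}\big(\xi\cdot[\underline M]\big)$ for the Bergman class $[\underline M]\in A^{\crk(M)}(X_{A_n})$, the right-hand side of the theorem becomes $A_{a_1,\dots,a_k}(y)^{-1}\sum_{i} \deg_{X_{A_n}}\!\big(\alpha_{n-i,B}^{a_1}\cdots\alpha_{n-i-k+1,B}^{a_k}\cdot[\underline M]\big)\,y^i$. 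First I would carefully set up this compatibility, and as a sanity check verify the case $M=U_{r+1,r+1}$, where $[\underline M]$ is the fundamental class, $T_M(1,y)=1$, and the identity collapses to the definition of $A_{a_1,\dots,a_k}(y)$ after using the reversal symmetry $\alpha_{j,B}\leftrightarrow\alpha_{n+1-j,B}$ of $X_{A_n}$.

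\textbf{Step 2 (valuativity reduction).} After Step 1 both sides are polynomials in $y$ whose coefficients, for fixed rank $r+1$, fixed ground set $\{0,\dots,n\}$, and fixed composition $(a_1,\dots,a_k)$, are matroid invariants of $M$. The coefficients of $T_M(1,y)$ are valuative, and $\deg_{X_{A_n}}(\xi\cdot[\underline M])$ is valuative in $M$ because $M\mapsto[\underline M]$ is a linear function of the indicator of the matroid polytope. Hence the difference of the two sides is a valuative invariant, and by the theorem of Derksen--Fink it vanishes identically once it vanishes on all Schubert (shifted) matroids of the given rank and size. So I would reduce to checking the identity for $M$ a Schubert matroid.

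\textbf{Step 3 (the Schubert-matroid computation).} For a Schubert matroid $M$, the Bergman class $[\underline M]$ is the multidegree of a Schubert variety in $\Gr(r+1,n+1)$, so each $\deg_{X_{A_n}}\!\big(\alpha_{n-i,B}^{a_1}\cdots\alpha_{n-i-k+1,B}^{a_k}\cdot[\underline M]\big)$ becomes a Schubert-calculus number: the degree of a product of pullbacks of $\mathscr O(1)$'s from several Grassmannians against a Schubert class, evaluable by the Pieri rule. Independently, $T_M(1,y)$ for a Schubert matroid has a known closed form via the external activity of its (explicitly described) bases. I would then verify that these two explicit expressions agree after multiplying by $A_{a_1,\dots,a_k}(y)$, which amounts to a finite family of identities among mixed Eulerian numbers and binomial-type coefficients, leaning on Postnikov's recursions for the $A_{c_1,\dots,c_n}$ and, if needed, an induction on the number of ``steps'' of the Schubert matroid.

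\textbf{Main obstacle.} Steps 1 and 2 are essentially formal; the real content is Step 3 --- establishing the mixed-Eulerian identities that make the Schubert-calculus intersection numbers factor as ``$T_M(1,y)$ times $A_{a_1,\dots,a_k}(y)$''. The delicate point is composition-independence: the quotient $A_{a_1,\dots,a_k}(y)^{-1}\sum_i\deg(\cdots)y^i$ must come out the same for every composition of $r$, and extracting this from the Pieri expansion --- or, alternatively, proving it directly by a deletion--contraction induction on $M$ using the behaviour of $A^\bullet(M)$ and of the classes $\alpha_{i,M}$ under matroid deletion and contraction --- is where the argument will take work.
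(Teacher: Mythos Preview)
Your strategy is genuinely different from the paper's, and while Steps~1 and~2 are sound, Step~3 is where all the content lies and you have not actually carried it out---you flag it yourself as the ``main obstacle'' and leave it as a sketch. The paper does not use valuativity or the Derksen--Fink basis at all. Instead it works entirely inside $A^\bullet(X(\Delta_{n+1}))$ and proceeds in two steps:

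\smallskip
\noindent\textbf{(a)} It shows (Corollary~7.9) that a contiguous product $\gamma_{1+i}^{r_1}\cdots\gamma_{k+i}^{r_k}$ (here $\gamma_j=\alpha_{n+1-j}$) is a linear combination of certain ``one-window'' symmetrized Minkowski weights $\Phi_{r,j}$ with coefficients the mixed Eulerian numbers $A_{0^{\bullet},r_1,\ldots,r_k,0^{\bullet}}$. Summing over the shift $i$ factorizes the generating function as $\bigl(\sum_j \Phi_{r,j}\,y^j\bigr)\cdot A_{r_1,\ldots,r_k}(y)$. This makes the composition-independence you highlighted completely transparent: all dependence on $(a_1,\ldots,a_k)$ sits in the scalar factor $A_{a_1,\ldots,a_k}(y)$, before one ever touches $M$.

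\smallskip
\noindent\textbf{(b)} It then proves (Theorem~9.1) that $\sum_j \deg_{A^\bullet(M)}(\Phi_{r,j})\,y^j = T_M(1,y)$. Using the fan displacement rule with a suitably chosen generic vector, $\deg_{A^\bullet(M)}(\Phi_{r,j})$ becomes a count of tuples $(x_1>\cdots>x_{r+1})$ with $x_i\in F_i\setminus F_{i-1}$ for a maximal chain of flats $\mathcal F$, subject to a position constraint; a short direct bijection identifies these tuples with bases of $M$ of external activity exactly $j$.

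\smallskip
The factorization in~(a) is precisely the missing idea in your plan: it separates the universal mixed-Eulerian part from a single $M$-dependent generating function, so one never has to verify composition-by-composition identities. By contrast, your Step~3 would have to establish, for every Schubert matroid and every composition of $r$, an identity whose two sides you can only describe as ``evaluable by the Pieri rule'' and ``a known closed form''; you give no mechanism for proving these match. Your assertion that for a Schubert matroid ``the Bergman class $[\underline M]$ is the multidegree of a Schubert variety in $\Gr(r+1,n+1)$'' is also imprecise---the Bergman class lives in $A^\bullet(X_{A_n})$, and while there is a relation to torus-orbit closures in flag varieties, turning that into the intersection numbers you need is not as immediate as the phrase suggests.
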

The significance of $T_M(1,y)$ for the $h$-vector follows from the fact that
$$[y^i]T_{M^*}(1,y)=h_{\rk(M)-i},$$
where $M^*$ is the matroid dual of $M$.
Hence the intersection numbers we are able to compute are directly related to log-concavity statements associated matroid $h$-vector sequences.

In \Cref{thm:mixedEuleriangenerating} we show that
$$\frac{A_{a_1,\ldots,a_k}(y)}{(1-y)^{r+1}}=\sum_{i=0}^{\infty}(i+1)^{a_1}\ldots (i+k)^{a_k}y^i,$$
generalizing the known identity for the Eulerian polynomial $A_r(y)$, which is the specialization to $k=1$.\footnote{Nadeau and Tewari independently observed this identity, later appearing in \cite{NadeauTewari}.} We also note that the Eulerian polynomial $A_r(y)$ is the generating function for permutations in $S_r$ with exactly $i$ descents, and
$A_{r-k,k}(y)$ is the generating function for ``refined Eulerian
numbers'', permutations of $S_{r+1}$ with $k$ descents and
last element $i+1$ \cite{Consecutive}. We remark that Postnikov \cite[Theorem 16.4]{Postnikov} showed  $A_{a_1,\ldots,a_k}(1)=r!$ using affine Coxeter groups, resolving a conjecture of Stanley. Because $A_{1^r}(y)=r!$, we obtain the following corollary directly extracting $T_M(1,y)$ coefficients.
\begin{cor} The coefficients of $T_M(1,y)$ can be computed as,
$$[y^i]T_M(1,y)=\frac{1}{r!}\deg_{A^\bullet(M)}(\gamma_{i+1}\ldots \gamma_{i+r}).$$
\end{cor}
Combining this with the known result for $\omega\in A^{r-c}(M)$ that \cite[Lemma 5.1]{HK12}
$$\deg_{A^\bullet(M)}(\gamma_n^i \omega)=\deg_{A^\bullet(\tau^iM)}(\omega),$$
where $\tau^i$ denotes matroid truncation to corank $i$, and the formula
$$[x^i]T_M(1+x,y)= T_{\tau^i M}(1,y)-(y-1)T_{\tau^{i-1}M}(1,y),$$
we conclude that in fact every coefficient of the Tutte polynomial $T_M(x,y)$ can be written as explicit linear combinations of degrees of products of $\gamma_{i}$. 

\begin{rmk}
Prior to this work, there was a consensus among the experts in the field that because $A^\bullet(M)$ is constructed using only the information of the lattice of flats, one could not access invariants of matroids like $T_M(1,y)$ through degree computations in $A^\bullet(M)$ which are sensitive to the presence of parallel elements.

How this is mitigated in the present work is that for two matroids $M,M'$ with the same lattice of flats, the elements $\gamma_i$ will not necessarily induce the same elements in $A^1(M)=A^1(M')$ even if both $M,M'$ have the same ground set $\{0,\ldots,n\}$. The induced classes are ultimately determined by the relative positioning of the normal fans of $P_{i,n+1}$ and the Bergman fans $\Delta_M,\Delta_{M'}$ in $\mathbb{R}^{n+1}/\langle (1,\ldots,1)\rangle$, and the embedding of the Bergman fans depend on the information of which elements lie in which flats.
\end{rmk}

\subsection*{Organization} The structure of our paper is as follows. In \cref{MatroidTutte} we give the needed background on matroids. In  \cref{sec:MWmain,sec:Delta,CombChowSection} we collect the needed algebraic background material on Minkowski weights, the permutohedral fan and the Chow rings of matroids. In \cref{AlphaSec1,AlphaSec2} we introduce the nef divisors $\gamma_1,\dots,\gamma_n$ on the permutohedral fan and describe how products of these decompose nicely in terms of what we call symmetrized Minkowski weights, which freely generate the permutation invariant portion of the space of Minkowski weights on the permutohedral fan. Particular focus is given to the case of what we call one-window symmetrized Minkowski weights.

\Cref{sec:sliding sets} introduces the combinatorics of the sliding sets problem, and in \cref{SlidingSetsGeneral} we show how this tool can be used to compute products with symmetrized Minkowski weights. We put this tool to use in \cref{PhirkSection} where we show how $T_M(1,y)$ arises as a product with various one-window symmetrized Minkowski weights, ultimately proving \cref{thm:mixedintro}. This is used in \cref{LogConcavitySection} to prove our main result, \cref{preintrothm}.

In \Cref{KlyachkoAppendix}, we consider questions motivated by Schubert calculus. Specifically, we evaluate Schubert and Schur polynomials in the differences $\gamma_n, \gamma_{n-1}-\gamma_n,\dots, \gamma_1 - \gamma_2,-\gamma_1$. Results of Klyachko \cite{K85} give alternative, simpler, expressions for these classes. Integrating our previous results with those of Klyachko, we are able to connect the images of these classes in $A^\bullet(M)$ to the reliability polynomial of $M$. Finally, in \cref{RecursionAppendix}  we present a generalized deletion-contraction recursion to compute the degree of an arbitrary symmetrized Minkowski weight in the Chow ring of a matroid. However we do not know how to relate the numbers produced by this recursive process to known matroid invariants, which we believe would would be necessary to give a satisfactory answer to  \cref{quest:whatinvariants}.

\section{Matroids and Tutte Polynomials}
\label{MatroidTutte}
In this section, we recall the necessary combinatorial background. A matroid is a combinatorial generalization of both graphs and vector configurations which encodes a notion of ``independence''. For the perspective emphasized in our work we recommend the surveys of Ardila \cite[Section~7]{Ardila} and Katz \cite{KatzSurvey}.
\subsection{Matroids through their flats}

We mostly focus on the description of matroids in terms of flats. A matroid $M$ on ground set $E$ is determined by a collection of subsets $\mathcal{L}^M$ of $E$ (which unless explicitly stated otherwise we will take to be $E=\{0,1,2,\dots,n\}$) called the \emph{flats of $M$} satisfying the following axioms:
\begin{enumerate}
    \item $E\in \mathcal{L}^M$
    \item If $F,G\in \mathcal{L}^M$ then $F\cap G\in \mathcal{L}^M$
    \item If $F$ is a flat, then the minimal flats $G_1,\cdots G_k$  containing $F$ have the property that $G_1\setminus F ,\ldots G_k\setminus F$ are disjoint and their union is $E \setminus F$.
\end{enumerate}
If there is a $K$-vector space $V$ and vectors $v_0,\ldots,v_n\in V$ there is an associated \emph{representable matroid} $M$ over the field $K$ with  $$\mathcal{L}^M:=\{F\subset \{0,\ldots,n\}:v_j\not\in span(\{v_i\}_{i\in F})\text{ for all }j\not \in F\},$$
so the flats of $M$ correspond to the subspaces of $V$ spanned by subsets of the vectors $v_i$. A special case of representable matroids are \emph{graphic matroids}, which for a graph $G$ with vertices $\{0,\ldots,n\}$ has vectors $e_i-e_j$ for all $ij$ in the edge set of $G$.
For the reader new to matroids it would be instructive to verify all of the facts below (and the axioms above) for representable matroids, which all correspond to basic facts about finite-dimensional vector spaces.

An important example of a matroid is the uniform matroid.
\begin{defn}
Let $U_{n+1}$ denote the matroid whose flats are every subset of $\{0,1,\ldots,n\}$. We denote the flats of $U_{n+1}$ (the set of all subsets of $\{0,\ldots,n\}$) by $\mathcal{L}^{n+1}$.
\end{defn}
The uniform matroid $U_{n+1}$ is representable by any basis of $K^{n+1}$.

For every $A\subset E$ there is a unique minimal flat containing $A$, which in the representable setting corresponds to the subspace spanned by $\{v_i\}_{i\in A}$.
\begin{defn}
We denote by $\overline{A}\in \mathcal{L}^M$ for the unique minimal flat containing $A$.
\end{defn}
An important role in the combinatorics of flats will be played by the chains of flats of $M$. Every maximal chain of flats of $M$ between $\overline{\emptyset}$ and $E$ has the same length.
\begin{defn}
If $\overline{\emptyset} \subsetneq F_1 \subsetneq \cdots \subsetneq F_r\subsetneq E$ is a maximal chain of flats, then the \emph{rank} of $M$ is defined by $\rk(M)=r+1$ and the \emph{corank} of $M$ is defined by $\crk(M)=|E|-(r+1)$.
\end{defn}
In the representable case, $\rk(M)$ is the dimension of the span of the vectors $v_0,\ldots,v_n$.
Unless otherwise stated, we will write
$$r+1=\rk(M).$$
\begin{defn}
For $0\le c \le r$ we define $$\mathcal{L}^{M}_{(c)}=\{\mathcal{F}=\{\overline{\emptyset} \ne F_1\subsetneq \ldots \subsetneq F_c \ne E\}: F_i\in \mathcal{L}^M\}.$$
For $\mathcal{F}\in \mathcal{L}^{M}_{(c)}$ we will denote the flats in this chain by roman script $F_1\subsetneq \ldots \subsetneq F_c$ with the convention that $F_0=\overline{\emptyset}$ and $F_{c+1}=E$.
\end{defn}
By convention, $\mathcal{L}^M_{(0)}=\{\emptyset\}$ containing the single empty chain of flats, and since $\mathcal{L}^M_{(1)}=\{\{F\}:F\in \mathcal{L}^M\setminus \{\overline{\emptyset},E\}\}$, we often directly identify
$\mathcal{L}^M_{(1)}=\mathcal{L}^M\setminus \{\overline{\emptyset},E\}$.
 
\begin{defn}
For any subset $A\subset E$, we define $\rk_M(A)$ to be the maximal $c$ such that there is a chain of flats $\mathcal{F}\in \mathcal{L}^M_{(c)}$ with $F_c=\overline{A}$, and we define $\crk_M(A)=|E|-\rk_M(A)$. In particular, $\rk(M):=\rk_M(E)$ and the corank $\crk(M):=\crk_M(E)$ of the matroid $M$.
\end{defn}
In the representable case, $\rk_M(A)$ is the dimension of the span of the vectors lying in $A$.
\begin{rmk}
A matroid can be equivalently axiomatized as a natural-number valued rank function $\rk_M$ on subsets of $E$, which has to satisfy $\rk_M(\emptyset)=0$, $\rk_M(\{i\})\le 1$ for all $i\in E$, and the submodularity condition $\rk_M(A\cup B)+\rk_M(A\cap B)\le \rk_M(A)+\rk_M(B)$ for all $A,B\subset E$.
\end{rmk}

 \begin{defn}
 Given a subset $A\subset E$, we may consider the \emph{restriction} $M|_A$ with $$\mathcal{L}^{M|_{A}}=\{F\cap A:F\in \mathcal{L}^M\},$$ the \emph{deletion} $M\setminus A=M|_{E\setminus A},$ and the \emph{contraction} $M/A$ with
 $$\mathcal{L}^{M/A}:=\{F\setminus A: A\subset F\in \mathcal{L}^M\}.$$
 \end{defn}
 In the representable case, restriction $M|_A$ is the matroid for the vectors $v_i$ with $i\in A$, the deletion $M\setminus A$ is the matroid for the vectors $v_i$ with $i\not\in A$, and the contraction $M/A$ is the matroid for the vectors $\pi(v_i)$ with $i\not\in A$, where $\pi$ is the quotient map $V\to V/span(\{v_i\}_{i\in A})$.

\begin{defn}
An \emph{independent set} $I\subset M$ is a subset such that $\rk_M(I)=|I|$. A \emph{basis} is an independent set of size $\rk_M(M)$. The \emph{independence complex}, $\operatorname{IN}(M)$, of $M$ is the collection of independent sets of $M$.
\end{defn}
It is a basic fact of matroid theory that every independent set is contained in a basis, and every subset of an independent set is independent. It follows that $\operatorname{IN}(M)$ is a pure simplicial complex.
In the representable case, an independent set corresponds to a linearly independent collection of vectors, and a basis corresponds to a basis of $span(\{v_i\}_{i\in E})$. 

\begin{defn}
A \emph{loop} in $M$ is an element of  $\overline{\emptyset}$. A matroid is said to be \emph{loopless} if it has no loops, i.e. $\emptyset=\overline{\emptyset}$. A \emph{coloop} $i\in E$ is an element with $\rk_M(E\setminus i)=\rk_M(M)-1$.
\end{defn}
In the representable case, a loop corresponds to a zero vector, and a coloop corresponds to a vector not contained in the span of the remaining vectors.

\subsection{Tutte Polynomials}
\label{TuttePolySection}
The Tutte polynomial is perhaps the most beloved enumerative invariants of matroids, and the subject of numerous papers (see the references in \cite[Section 7.6]{Ardila}). Given a matroid $M$ on ground set $E$, its Tutte polynomial is recursively determined (in fact, wildly over-determined) by the following deletion-contraction recurrence-relation: we have $T_{\emptyset}(x,y)=1$, and for $i\in E$ we have
$$T_M(x,y)=\begin{cases}T_{M/i}(x,y)+T_{M\setminus i}(x,y)&i\text{ not a loop and not a coloop,}\\
yT_{M\setminus i}(x,y)&i\text{ a loop,}\\
xT_{M/i}(x,y)&i\text{ a coloop.}\end{cases}.$$
There is a unique polynomial satisfying this recurrence and it is immediate that its coefficients are non-negative integers (see, e.g., \cite{Ardila}).

Using the deletion-contraction relation, one can easily show that
\[
T_M(x+1,1) = \sum_k \#\{I \in \operatorname{IN}(M) : |I| = \operatorname{rk}(M) - k \} x^k.
\]
The usual definition of the $h$-vector of a simplicial complex comes from replacing $x$ with $x-1$ in the generating function for faces  by their codimension. We thus have the following.
\begin{defn}
The \emph{$h$-vector} $h_0,\dots,h_{\operatorname{rk}(M)}$ of $M$ is the coefficient sequence of the $h$-polynomial $T_M(x,1) = \sum_{k=0}^{\operatorname{rk}(M)} h_{\operatorname{rk}(M) -k}x^k$.
\end{defn}
There is a well-known relationship between the Tutte polynomial of a matroid $M$ and its dual $M^*$ (whose bases are complements of bases of $M$): $T_M(x,y) = T_{M^*}(y,x)$. This gives the following fact.
\begin{prop}\label{fact:Dawsonfact}
 For a matroid $M$ with $h$-vector $h_0,\dots,h_{\operatorname{rk}(M)}$ and dual matroid $M^*$, we have $T_{M^*}(1,y)=\sum_{i=0}^{\rk(M)}h_{\rk(M)-i}y^i$.
\end{prop}
This fact is useful since in our later results $T_M(1,y)$ appears naturally.
\begin{cor}
\label{cor:nointernal}
The $h$-vector of a matroid is a non-negative integer sequence with no internal zeros.
\end{cor}
\begin{proof}
Non-negativity follows from the non-negativity of the Tutte polynomial. The deletion-contraction recurrence recursively shows that the set of $i$ such that $[x^i]T_M(x,1)\ne 0$ is a discrete interval of the form $\{a,\ldots,\rk(M)\}$, which verifies the no internal zeros property.
\end{proof}

Finally, we record an explicit formula for the coefficients of the Tutte polynomial in terms of basis activity that we will need later.
\begin{defn}
Let $B$ be a basis of $M$. An element $j \notin B$ is  \emph{externally active} for $B$ if for all $i<j$, the set $B-i \cup j$ is \textit{not} a basis of $M$. Write $\operatorname{ex}(B)$ for the set of  externally active elements for $B$.

Dually, an element $i \in B$ is  \emph{internally active} for $B$ if for all $j<i$, $B-i \cup j$ is \textit{not} a basis of $M$. Write $\operatorname{in}(B)$ for the set of internally active elements of $B$.
\end{defn}

\begin{prop}[Tutte~\cite{Tutte}]\label{inexTutte} For all $i,j$, the coefficient of $x^iy^j$ in $T_M(x,y)$ is the number of bases $B$ of $M$ with $|\operatorname{in}(B)|=i$ and $|\operatorname{ex}(B)|=j$.
In particular,
  $$[y^j]T_M(1,y)=|\{B\text{ a basis of $M$}:|\operatorname{ex}(B)|=j|\}.$$
\end{prop}

\section{Minkowski weights}\label{sec:MWmain}
In this section we describe the ring of Minkowski weights on a complete rational fan, following Fulton and Sturmfels \cite{FS97}, which is canonically isomorphic to the operational Chow ring of the associated complete toric variety. 
\label{ToricBackground}

\subsection{Fans}
Let $N$ be a lattice and let $N_{\mathbb{R}}:=N \otimes \mathbb{R}$. Let $\Delta$ be a rational fan in $N_{\mathbb{R}}$ -- that is a fan whose cones are generated by elements of $N$. We assume all fans are rational.

A fan $\Delta$ is \emph{complete} if the union of its cones is all of $N_{\mathbb{R}}$. We say that $\Delta$ is \emph{unimodular} or \emph{smooth} if every maximal cone (equiv. every cone) of $\Delta$ is generated by part of a basis of the lattice $N$.

The set of $i$-dimensional cones of $\Delta$ is denoted $\Delta_{(i)}$. In particular, $\Delta_{(\dim \Delta)}$ are the maximal cones and $\Delta_{(0)}=\{(0)\}$. For $\sigma\in \Delta$, we let $N_{\sigma}$ be the sub-lattice of $N$ spanned by $N\cap \sigma$.

\subsection{Minkowski weights}
\label{sec:MW}
In this subsection we take $\Delta$ to be a complete fan. We begin by defining the group of Minkowski weights on $\Delta$. 
\begin{defn}
A codimension $k$ \emph{Minkowski weight} on $\Delta$ is a function $\Phi:\Delta_{(\dim \Delta-k)}\to \mathbb{Z}$ satisfying the \emph{balancing condition}: For all $\tau \in \Delta_{(\dim \Delta-(k+1))}$ we have,
\[\sum_{\tau\subset \sigma \in \Delta_{(\dim\Delta-k)}} \Phi(\sigma)v_{\sigma/\tau}=0\in N/N_{\tau}\]
where $v_{\sigma/\tau}\in N/N_{\tau}$ is the primitive generator of the ray $(\sigma+N_{\tau})/N_{\tau}$.
Write $MW^k(\Delta)$ for the additive group of codimension $k$ Minkowski weights on $\Delta$ and $MW^\bullet(\Delta)=\bigoplus_{k=0}^{\dim \Delta} MW^k(\Delta)$.
\end{defn}
Since there is a unique cone $\{0\}\in \Delta_{(0)}$, we have a \emph{degree} map
$$\deg: MW^{\dim \Delta}(\Delta)\to \mathbb{Z}$$  defined by $\deg(\Phi) = \Phi(\{0\})$. 

There is a natural ring structure on $MW^\bullet(\Delta)$, computed explicitly using the following ``fan displacement rule''.
\begin{defn}[{\cite[Proposition 4.2]{FS97}}]\label{def:fanDisp}
For $\Phi_1\in MW^{c_1}(\Delta)$, $\Phi_2\in MW^{c_2}(\Delta)$, we define the product Minkowski weight $\Phi_1\cdot \Phi_2\in MW^{c_1+c_2}(\Delta)$ as follows.

Choose a generic vector $v\in N_{\mathbb{R}}$ (outside of a finite collection of hyperplanes), and for $\Phi_1\in MW^{c_1}(\Delta)$, $\Phi_2\in MW^{c_2}(\Delta)$, $\gamma \in \Delta_{(\dim\Delta-(c_1+c_2))}$ define
$$(\Phi_1 \cdot \Phi_2)(\gamma)=\sum_{(\sigma_1,\sigma_2)\in \Delta_{(\dim \Delta-c_1)}\times \Delta_{(\dim \Delta-c_2)}} m^{\gamma,v}_{\sigma_1,\sigma_2}\Phi_1(\sigma_1)\Phi_2(\sigma_2),$$
where
$$m^{\gamma,v}_{\sigma_1,\sigma_2}=\begin{cases}[N:N_{\sigma_1}+N_{\sigma_2}]&\sigma_1 \cap (\sigma_2+v) \ne \emptyset \text{ and } \gamma\subset \sigma_1,\sigma_2,\\0&\text{otherwise.}\end{cases}$$
Note that the condition $\gamma \subset \sigma_1,\sigma_2$ ensures that $N_{\sigma_1}+N_{\sigma_2}$ is a full-dimensional sublattice of $N$, and hence of finite index. The multiplicities $m^{\gamma,v}_{\sigma_1,\sigma_2}$ depend on $v$, but the formula for the product $(\Phi_1 \cdot \Phi_2)(\gamma)$ does not. 
\end{defn}
\begin{rmk}
The ring $MW^\bullet(\Delta)$ is isomorphic to the operational Chow ring $A^{\bullet}(X(\Delta))$ of the associated toric variety $X(\Delta)$ \cite[Theorem 3.1]{FS97}. The identification uses the fact that the Kronecker duality map 
\begin{align*}
    A^{k}(X(\Delta))\to \operatorname{Hom}(A_k(X(\Delta)), \mathbb{Z})
\end{align*}
is an isomorphism for complete toric varieties \cite[Theorem 3]{FMSS95}. The group $A_k(X(\Delta))$ is spanned by the $k$-dimensional toric subvarieties of $X(\Delta)$, which are indexed by the codimension $k$ cones of $\Delta$, and relations between the subvarieties are exactly encoded by the balancing conditions in $MW^k(\Delta)$. See also \cref{thm:MWvsA} below. 
\end{rmk}

%
\subsection{Minkowski Weights associated to polytopes}
\label{sec:polytope}
Here we record how a polytope gives rise to a Minkowski weight, and its relation to volumes and mixed volumes. We assume that $\Delta$ is a complete unimodular fan throughout. A reference for the material here is \cite[Theorem 5.1]{FS97}, the proof of which directly connects Minkowski weights and volumes of faces of polytopes as we describe, and \cite[Section 5.4]{F93} for mixed volumes. Let $M=N^{\vee}$ be the dual lattice to $N$, and $M_{\mathbb{R}}:=M\otimes \mathbb{R}$. Given a full dimensional polytope $P\subset M_{\mathbb{R}}$, its normal fan $\Delta_P\subset N_{\mathbb{R}}$ consists of the cones 
\begin{align*}
    \sigma_{Q}:=\{u\in N_{\mathbb{R}}\mid \langle u,v\rangle = \max\{\langle u,p\rangle\mid p\in P\}\text{ for all }v\in Q\},
\end{align*}
as $Q$ varies over all the faces of $P$. If $P$ is integral in $M$ and $\Delta_P$ refines a complete unimodular fan $\Delta$, then there is an associated Minkowski weight $\alpha\in MW^1(\Delta)$ which has the property that for $1\le i \le \dim(N_{\mathbb{R}})$, the Minkowski weight $\alpha^i\in MW^i(\Delta)$ has $\alpha^i(C)=0$ if $C\in \Delta_{(\dim \Delta-i)}$ is not contained in a cone $\sigma_{Q}$ with $Q$ a face of dimension $i$, and otherwise
$$\alpha^i(C)= \dim(Q)!\operatorname{vol}(Q),$$
where volume is normalized so that the fundamental parallelepiped of the intersection of $N$ with the affine hull of $Q$ has volume $1$. In particular,
\begin{align*}
    \deg(\alpha^{\dim(N)})=\dim(N)!\operatorname{vol}(P).
\end{align*}
Generalizing this last equality, if $\alpha_1,\ldots,\alpha_{\dim(N)}\in MW^1(\Delta)$ are associated to polytopes $P_1,\ldots,P_{\dim(N)}$, then
\begin{align*}
    \deg(\alpha_1\cdots\alpha_{\dim(N)})=\dim(N)!\operatorname{MV}(P_1,\ldots,P_{\dim(N)}),
\end{align*}
where $\operatorname{MV}(P_1,\ldots,P_{\dim(N)})$ denotes the \emph{mixed volume} normalized to $M$ of $P_1,\ldots,P_{\dim(N)}$.

Write $$MW^\bullet(\Delta)_{\mathbb{R}}:=MW^\bullet(\Delta)\otimes \mathbb{R}$$
for the ring of real Minkowski weights, with degree map $\deg:MW^{\dim \Delta}(\Delta)\to \mathbb{R}$.
If $P$ (or $P_1,\ldots,P_{\dim(N)}$) have normal fan(s) refining $\Delta$ but are not necessarily integral, then they induce in the same way real Minkowski weights in $MW^1(\Delta)_{\mathbb{R}}$ satisfying the same formulas, where volume in the first case is normalized to a (nonempty) lattice obtained by intersecting a translate of $M$ with the affine hull of $Q$.

Mixed volumes of polytopes satisfy a log-concavity inequality called the Aleksandroff-Fenchel inequality. As a point of comparison to \Cref{thm:MinkAHK}, we state this inequality using the degree formalism.
\begin{thm}
If $L_1,\ldots,L_{n-2}.H,K\in MW^1(\Delta)_{\mathbb{R}}$ are Minkowski weights associated to polytopes, then if $\Omega=L_1\cdots L_{n-2}$, the sequence of numbers
$$\deg(\Omega\cdot H^2),\deg(\Omega \cdot HK), \deg(\Omega\cdot K^2)$$
is a log-concave sequence.
\end{thm}
\section{The permutohedral fan $\Delta_{n+1}$}\label{sec:Delta}
Most of our computations will take place in the ring of Minkowski weights on the permutohedral fan, which we introduce here. This fan is universal, in a sense, for doing computations that will descend to the Chow rings of all matroids on a common ground set (see \cref{thm:bes} below). 

Let $N=\mathbb{Z}^{n+1}/\langle (1,\ldots 1)\rangle$ with dual lattice $M=\{(x_0,\ldots,x_n)\in \mathbb{Z}^{n+1}:\sum x_i=0\}$. Write $e_0,\dots,e_n$ for the images of standard basis of $\mathbb{Z}^{n+1}$ in $N$ and $N_{\mathbb{R}}$, and let $e_F$ denote the sum $\sum_{i \in F} e_i$ for a proper non-empty subset $F$ of $\{0,\dots,n\}$.
\begin{defn}
The $n$-dimensional permutohedron is the convex hull
$$\operatorname{Perm}_n=\operatorname{conv}\{(\sigma(0),\ldots,\sigma(n)\}:\sigma\text{ a permutation of }\{0,\cdots,n\}\}.$$
The permutohedral fan $\Delta_{n+1}\subset N_{\mathbb{R}}$ is the normal fan of $\operatorname{Perm}_n-v\subset M_{\mathbb{R}}$, for any vector $v\in \{\sum x_i=\frac{n(n+1)}{2}\}\cap \mathbb{Z}^{n+1}$.
\end{defn}
The permutohedral fan is also known to be the fan induced by the type $A_n$ braid arrangement, which is image in $N_{\mathbb{R}}$ of the collection of hyperplanes in $\mathbb{R}^{n+1}$ where two coordinates are equal.
\begin{defn}
For $\mathcal{F}=\{\emptyset \subsetneq F_1\subsetneq \cdots \subsetneq F_c\subsetneq \{0,\ldots,n+1\}\}\in \mathcal{L}^{n+1}_{(c)}$, define the cone
$$\sigma_{\mathcal{F}}=\sum_{i=0}^{c-1} \mathbb{R}_{\ge 0}e_{F_{i+1}}=\{\sum_{i=0}^{c} t_ie_{F_{i+1}\setminus F_{i}}:t_0\ge t_1 \ge \ldots \ge t_{c-1} \ge t_c=0\} \subset N_{\mathbb{R}}.$$
\end{defn}
Note that $\sigma_{\mathcal{F}}$ is a unimodular cone. The following result is well-known.
\begin{prop}
The $c$-dimensional cones of $\Delta_{n+1}$ are given by
$$\Delta_{n+1,(c)}=\{\sigma_{\mathcal{F}}:\mathcal{F}\in \mathcal{L}^{n+1}_{(c)}\}.$$
In particular, $\Delta_{n+1}$ is a complete unimodular fan.
\end{prop}

We conclude by proving that when computing products of Minkowski weights in $MW^\bullet(\Delta_{n+1})$ using the fan displacement rule of \cref{def:fanDisp}, the non-zero multiplicities $m^{\gamma,v}_{\sigma_1,\sigma_2}$ therein are all equal to $1$.
\begin{lem}
\label{lem:mult1}
Let $\mathcal{F}\in \mathcal{L}^{n+1}_{(k_1)}$ and $\mathcal{G}\in \mathcal{L}^{n+1}_{(k_2)}$. Then the coefficient $m^{\gamma,v}_{\sigma_\mathcal{F},\sigma_\mathcal{G}}$ equals $1$ precisely when $\sigma_\mathcal{F} \cap (\sigma_\mathcal{G}+v) \ne \emptyset$ and $\gamma \subset \sigma_\mathcal{F},\sigma_\mathcal{G}$, and is $0$ otherwise.
\end{lem}
\begin{proof}
We have $N_{\sigma}=\langle e_{P_0},\ldots,e_{P_{k_1}}\rangle$ where $P_i=F_{i+1}\setminus F_i$ (setting $F_0=\emptyset$ and $F_{k_1}=\{0,\ldots,n\}$), and similarly $N_\tau=\langle e_{Q_1},\ldots,e_{Q_{k_2+1}}\rangle$ where $Q_i=G_{i+1}\setminus G_i$, which are two partitions
$$\{0,\ldots,n\}=\bigsqcup_{i=0}^{k_1}P_i=\bigsqcup_{i=0}^{k_2}Q_i.$$ Assuming $m^{\gamma,v}_{\sigma_1,\sigma_2}\ne 0$ we have $m^{\gamma,v}_{\sigma_1,\sigma_2}=[N:N_{\sigma}+N_{\tau}]$ is finite. We will show that this index being finite implies that it is $1$. This follows from the lemma below.
\end{proof}

\begin{lem}
Suppose we have partitions $\{0,\ldots,n\}=\bigsqcup_{i=1}^{\ell_1}P_i=\bigsqcup_{i=1}^{\ell_2}Q_i$, such that the lattice $\langle e_{P_1},\ldots,e_{P_{\ell_1}},e_{Q_{1}},\ldots,e_{Q_{\ell_2}}\rangle$ is of finite-index inside $\mathbb{Z}^{n+1}/\langle e_0+\ldots+e_n\rangle$. Then this index is $1$.\end{lem}
\begin{proof}
Let $\Lambda_P=\langle e_{P_1},\ldots,e_{P_{\ell_1}} \rangle$ and $\Lambda_Q=\langle e_{Q_1},\ldots,e_{Q_{\ell_2}}\rangle$.
We proceed by induction on $n$.
If any of the $P_i$ or $Q_j$ are singletons, without loss of generality $P_1=\{n\}\subset Q_1$, then $\Lambda_P+\Lambda_Q$ contains $\langle e_n \rangle$, and the result follows from the inductive hypothesis applied to $P=\{P_2,\ldots, P_{\ell_1}\}$ and $Q=\{Q_1\setminus n,Q_2,\ldots, Q_{\ell_2}\}$ in $\mathbb{Z}^n/\langle e_0+\ldots+e_{n-1}\rangle$. Hence assume now that all $P_i,Q_j$ have size at least $2$. We will show that $\langle e_{P_1},\ldots,e_{P_{\ell_1}},e_{Q_{1}},\ldots,e_{Q_{\ell_2}}\rangle$ is not finite-index inside $\mathbb{Z}^{n+1}/\langle e_0+\ldots+e_n\rangle$. 

Because every index $x\in \{0,\ldots,n\}$ belongs to exactly one $P_i$ and one $Q_j$, we can find distinct elements
$i_0,\ldots,i_{m-1},i_{m}=i_{0}$ such that $i_{h},i_{h+1}$ are equivalent under the partition $P$ if $h$ is even and $i_{j},i_{j+1}$ are equivalent under the partition $Q$ if $j$ is odd. We may assume that $m$ is even, as if $m$ is odd then $i_{m-1},i_m=i_0,i_1$ are all equivalent under $P$, so we may go directly from $i_{m-1}$ to $i_1$ in the cycle, skipping $i_m$. Then writing $m=2k$ we have
\begin{align*}\Lambda_P &\subset \{x_{i_{0}}-x_{i_{1}}=0\}\cap \{x_{i_{2}}-x_{i_{3}}=0\}\cap \ldots \cap \{x_{i_{2k-2}}-x_{i_{2k-1}}=0\},\text{ and}\\
\Lambda_Q&\subset \{x_{i_{1}}-x_{i_{2}}=0\}\cap \{x_{i_{3}}-x_{i_{4}}=0\}\cap \ldots \cap \{x_{i_{2k-1}}-x_{i_{0}}=0\}\text{, so}\end{align*}
$$\Lambda_P+\Lambda_Q\subset \{x_{i_0}-x_{i_1}+x_{i_2}-\ldots-x_{i_{2\ell-1}}=0\},$$
and hence cannot be of finite index in $\mathbb{Z}^n/\langle e_0+\ldots+e_n\rangle$, a contradiction.
\end{proof}
\section{Combinatorial Chow rings}
\label{CombChowSection}
In this section we define the Chow ring of a matroid $M$. We describe how to relate computations done in the ring of Minkowski weights on $\Delta_{n+1}$ to computations done in the Chow ring of $M$. This allows us to use the powerful results of \cite{AHK18} in the setting of Minkowski weights. We will always assume from now on the $M$ is a loopless rank $r+1$ matroid on $\{0,\dots,n\}$. It is important to note that we do allow $M$ to have parallel elements (rank one flats of cardinality larger than one).
\subsection{Chow rings of matroids}
We define the combinatorial Chow ring following \cite{FY04,AHK18}.
\begin{defn}
The \emph{combinatorial Chow ring} of $M$ is
\[
A^{\bullet}(M)=\mathbb{Z}\left[x_F: F\in \mathcal{L}^{M}_{(1)}\right]/\left(I_M + J_M\right)
\]
where the ideals $
I_M = \langle x_F x_G : F \textup{ and } G \textup{ are incomparable in }\mathcal{L}^{M}_{(1)} \rangle,$
and
$
J_M = \langle \sum_{i \in F} x_F - \sum_{j \in G} x_G : 0 \leq i < j \leq n\rangle.
$ For $\mathcal{F}\in \mathcal{L}^M_{(c)}$ write $x_{\mathcal{F}}=\prod_{i=1}^c x_{F_i}$.
\end{defn}
This is a $\mathbb{Z}$-graded ring, generated in degree $1$ by the $x_F$. We have $A^0(M)= \mathbb{Z}$, and  $A^{m}(M)=0$ for any $m>r$. Every product $x_{\mathcal{F}}$ with $\mathcal{F}\in \mathcal{L}^{M}_{(r)}$ is equal to the same generator of $A^r(\Delta_M)$ \cite[Proposition~5.8]{AHK18}, and sending this product to $1\in \mathbb{Z}$ induces a natural isomorphism
$$\deg:A^r(M)\to \mathbb{Z}$$
called the degree.

\begin{defn}
We set $A^\bullet(M)_{\mathbb{R}}:=A^\bullet(M)\otimes \mathbb{R}$, and define the \emph{nef cone} $A^1_{nef}(M)\subset A^1(M)_{\mathbb{R}}$ to be the real cone generated by sums $\sum c(u_F)x_F\in A^1(M)_{\mathbb{R}}$ where $c$ is a convex homogeneous piecewise-linear function on the fan $\Delta_{n+1}$. We call an element of the nef cone a \emph{nef divisor}.
\end{defn}
\subsection{Bergman fans, Minkowski weights, and $A^\bullet(U_{n+1})$}
There is a fan $\Delta_M$ canonically associated to $M$ which encodes chains of flats in $M$.
\begin{defn}
The \emph{Bergman fan} $\Delta_M$ of $M$ is the $r$-dimensional fan in $N=\mathbb{Z}^{n+1}/\langle e_0+\cdots+e_n\rangle$ with $c$-dimensional cones  $\Delta_{M,(c)}=\{\sigma_{\mathcal{F}}:\mathcal{F}\in \mathcal{L}^{M}_{(c)}\}$ for $0\le c \le r$.
\end{defn}
The Bergman fan of a matroid is unimodular \cite[Proposition~2.4]{AHK18}, but almost never complete. However, for $M=U_{n+1}$ we have $\Delta_{U_{n+1}}=\Delta_{n+1}$, which is a complete unimodular fan. Because of this, the combinatorial Chow ring for $U_{n+1}$ can be identified with the ring of Minkowski weights on $\Delta_{n+1}$.
\begin{thm}\label{thm:MWvsA}
The map
\[A^\bullet(U_{n+1}) \to MW^{\bullet}(\Delta_{n+1}),\quad
z \mapsto (\sigma_{\mathcal{F}} \mapsto \deg(zx_\mathcal{F})),
\] 
is an isomorphism of rings. Under this map, the degree of an element $A^{n}(\Delta_{n+1})$ equals the degree of the corresponding Minkowski weight. Further, the Minkowski weight  corresponding to a polytope $P$ is identified with $\sum h_P(u_F)x_F\in A^1_{nef}(\Delta_{n+1})$, where $h_P$ is the convex homogenous piecewise-linear  support function $$h_P(v)=\max_{p\in P} \langle v,p\rangle.$$
\end{thm}
\begin{proof}
The isomorphism of rings follows from the main result of \cite{FS97}, since the ring $A^\bullet(U_{n+1})$ is the Chow ring of the unimodular complete fan $\Delta_{n+1}$. Here one must use the fact that the Chow ring used in \cite{FS97} is the \textit{operational} Chow ring, and for a unimodular complete fan this is canonically isomorphic to the Chow homology group endowed with the intersection product \cite[Corollary~17.4]{F93}. The statement identifying the nef classes of polytopes follows from \cite[Theorem~5.1]{FS97} since $\sum h_P(e_F) x_F$ is the class of ample divisor associated to the polytope $P$.
\end{proof}
Even though there is not a ring of Minkowski weights associated to an arbitrary matroid $M$, we can compute degrees in $A^\bullet(M)$ using Minkowski weights on $\Delta_{n+1}$. In this way, we can obtain log-concavity statements exclusively from Minkowski weight computations.

\begin{defn}
\label{defn:BergmanMinkowski}
The \emph{Bergman fan Minkowski weight} $[\Delta_M]\in MW^{n-r}(\Delta_{n+1})$ is defined by setting for $\mathcal{F}\in \mathcal{L}^{n+1}_{(r)}$
$$[\Delta_M](\sigma_{\mathcal{F}})=\begin{cases}1 & \mathcal{F}\in \mathcal{L}^M_{(r)}\\0 &\text{otherwise.}\end{cases}$$
Abusing notation, we will identify $[\Delta_M] \in MW^{n-r}(\Delta_{n+1})$ with its image in $A^{n-r}(\Delta_{n+1})$ under the isomorphism from \cref{thm:MWvsA}.
\end{defn}
A proof that $[\Delta_M]$ satisfies the balancing condition can be found in \cite[Theorem~4.2.6]{MS15}. The next result allows us to do many computations in the Chow ring of the permutohedral fan.
\begin{thm}\label{thm:bes}
There is a surjective map
$$\pi_M:A^\bullet(U_{n+1})\to A^\bullet(M)$$ defined for $S\in \mathcal{L}^M_{(1)}$ by
$$x_S\mapsto \begin{cases}x_S & \text{$S\in \mathcal{L}_{(1)}^M$}\\0&\text{otherwise.}\end{cases}$$
The kernel of this map is the annihilator of $[\Delta_M]$.
There is an induced map $A^1_{nef}(U_{n+1})\to A^1_{nef}(M)$. 
For $\Omega\in A^{n-r}(U_{n+1})$ we have $$\deg_{A^\bullet(U_{n+1})}([\Delta_M]\cdot \Omega)=\deg_{A^\bullet(M)}(\pi_M(\Omega)).$$
\end{thm}
\begin{proof}
The first statement follows at once from the definition of the Chow ring of $M$. That the kernel is the annihilator occurs as \cite[Theorem 4.2.1]{BES19}. The statement relating the nef cones is \cite[Proposition~4.4]{AHK18}. Finally, the statement on the degrees follows from the isomorphisms 
$$A^r(M) \leftarrow A^r(U_{n+1})/\operatorname{ann}([\Delta_M]) \stackrel{-\cdot [\Delta_M]}{\to} A^n(U_{n+1}),$$
provided we can show that an element of degree $1$ on the left corresponds  to an element of degree $1$ on the right. For this we note that for any $\mathcal{F}\in \mathcal{L}^M_{(r)}$, $x_{\mathcal{F}}$ has degree $1$ on the left, and this corresponds to the product $x_{\mathcal{F}} [\Delta_M]$ on the right. By the isomorphism from \cref{thm:MWvsA}, this is exactly the Bergman fan Minkowski weight $[\Delta_M] \in MW^r(\Delta_{n+1})$ evaluated on $\sigma_{\mathcal{F}}$, which is $1$ by definition of the Bergman fan Minkowski weight.
\end{proof}

Combining this with the isomorphism $MW^\bullet(\Delta_{n+1})\cong A^\bullet(\Delta_{n+1})$, we can rephrase the Mixed Hodge-Riemann relations of \cite[Theorem~8.9]{AHK18} in terms of Minkowski weight intersections as follows.
\begin{thm}
\label{thm:MinkAHK}
If $L_1,\ldots,L_{n-r-2},H,K\in MW^1(\Delta_{n+1})_{\mathbb{R}}$ are the Minkowski weights of generalized permutahedra, then denoting $\Omega=L_1\cdots L_{n-r-2}$, the sequence of numbers
$$\deg([\Delta_M]\cdot \Omega \cdot H^2),\deg([\Delta_M]\cdot \Omega \cdot HK),\deg([\Delta_M]\cdot \Omega \cdot K^2)$$
is a log-concave sequence of nonnegative real numbers.
\end{thm}
\begin{rmk}
The Hodge-Riemann relations established in \cite{ADH20} imply a similar theorem with $\Delta_M$  replaced with an arbitrary product $\Delta_{M_1}\times \cdots \times \Delta_{M_k}$, and $\Delta_{n+1}$ replaced with any fan $\Delta$ refining this product.
\end{rmk}

\subsection{Restriction maps}
In this subsection we describe a family of restriction maps that we will use later. For a non-empty subset $F \subset \{0,\dots, n\}$, we write $U_F$ for the uniform matroid on ground set $F$ and $\Delta_F$ for the permutohedral fan on the index set $F$. That is, in the construction of the permutohedral fan, replace $\mathbb{R}^{n+1}$, $\mathbb{Z}^{n+1}$, etc., with $\mathbb{R}^F$, $\mathbb{Z}^F$, etc.
\begin{prop}
There is a restriction map $\operatorname{res}_F:A^\bullet(\Delta_{n+1}) \to A^\bullet(\Delta_{F} ) \otimes  A^\bullet(\Delta_{\{0,\dots,n\} \setminus F} )$ defined on the generators of $A^\bullet(\Delta_{n+1})$ by the following rule:
\[
x_G \mapsto
\begin{cases}
x_G \otimes 1 & \textup{if } G \subsetneq F, \\
1 \otimes x_{G \setminus F} & \textup{if } F \subsetneq G,\\
0 &\textup{otherwise}.
\end{cases}
\]
If $\alpha \in A^{n-1}(\Delta_{n+1})$ then $\deg( \alpha x_F ) = \deg( \operatorname{res}_F(\alpha))$, where degree on the right side means product of the degrees of the tensor factors.
\end{prop}
\begin{proof}
The description of $\operatorname{res}_F$ follows from the description of the pullback morphism in \cite[Section 5]{AHK18} and then applying \cite[Proposition 3.5(1)]{AHK18}. The second claim follows by verifying the result is true on a generating subset of $A^{n-1}(\Delta_{n+1})$. Since $A^{n-1}(\Delta_{n+1})$ is generated by products of the form $x_{\mathcal{G}}$, where $\mathcal{G}\in \mathcal{L}^{n+1}_{(n-1)}$, the result follows at once from the definition of the degree.
\end{proof}
Repeatedly applying the above result gives the following.
\begin{prop}\label{prop:starprod}
For $\mathcal{F}:\emptyset \subsetneq F_1 \subsetneq \dots \subsetneq F_c \subsetneq \{0,\dots,n\}$, there is a surjective degree preserving ring homomorphism
\[
\operatorname{res}_{\mathcal{F}}:A^\bullet(U_{n+1}) \to 
A^\bullet(U_{F_1}) \otimes  A^\bullet(U_{F_2 \setminus F_1}) \otimes \dots \otimes A^\bullet(U_{\{0,\dots,n\} \setminus F} )
\]
defined on the generators of $A^\bullet(U_{n+1})$ by the rule
\[
x_G \mapsto
\begin{cases}
1^{\otimes (j)} \otimes x_{G \setminus F_{j}} \otimes 1^{\otimes (c-j)} & \textup{if } F_{j} \subsetneq G \subsetneq F_{j+1}, \\
0 &\textup{otherwise},
\end{cases}
\]
where we tacitly define $F_0 = \emptyset$ and $F_{c+1} = \{0,\dots,n\}$. If $\omega \in A^{n-c}(U_{n+1})$ then $$\deg( \omega x_{F_1} \dots x_{F_c} ) = \deg(\operatorname{res}_{\mathcal{F}}(\omega)),$$
where degree on the right side means product of the degrees of the tensor factors.
\end{prop}
Due to the isomorphism $A^\bullet(U_{n+1}) \to MW^\bullet(\Delta_{n+1})$ from \cref{thm:MWvsA} we have a corresponding map
\[
\operatorname{res}_{\mathcal{F}} : MW^\bullet(\Delta_{n+1}) \to 
MW^\bullet(\Delta_{F_1}) \otimes  MW^\bullet(\Delta_{F_2 \setminus F_1}) \otimes \dots \otimes MW^\bullet(\Delta_{\{0,\dots,n\} \setminus F} ).
\]
The target of this map has a degree map, which is the product of the degrees of the tensor factors. We have the following useful result.
\begin{prop}
\label{starprop}
If $\Phi\in MW^{k}(\Delta_{n+1})$, $\mathcal{F}\in \mathcal{L}^{n+1}_{(c)}$, and $\mathcal{G}_i\in \mathcal{L}^{F_{i+1}\setminus F_i}_{(g_i)}$ for $i=0,\ldots,c$ with $\sum g_i=n-k-c$, then 
$$\operatorname{res}_{\mathcal{F}}(\Phi)(\sigma_{\mathcal{G}_0}\otimes \cdots \otimes \sigma_{\mathcal{G}_{c}})=\Phi(\sigma_{\mathcal{H}})$$
where $\mathcal{H}=\mathcal{F}\cup \bigcup_{i=0}^{c} \{G\cup F_i:G\in\mathcal{G}_i\}$.
\end{prop}
\begin{proof}
We have the chain of equalities
\begin{align*}\Phi(\sigma_{\mathcal{H}})=&\deg\left(\Phi\cdot x_{\mathcal{F}} \prod_{i=0}^c\prod_{S\in \{G\cup F_i: G\in \mathcal{G}_i\}} x_{S}\right)\\=&\deg\left(\operatorname{res}_{\mathcal{F}}(\Phi)\cdot \left(\bigotimes_{i=0}^c\prod_{S\in \{G\cup F_i: G\in \mathcal{G}_i\}} x_{S\setminus F_i}\right)\right)\\
=&\deg\left(\operatorname{res}_{\mathcal{F}}(\Phi)\cdot \bigotimes_{i=0}^c x_{\mathcal{G}_i}\right)\\=&\operatorname{res}_{\mathcal{F}}(\Phi)\left(\bigotimes_{i=0}^c \sigma_{\mathcal{G}_i}\right).\qedhere\end{align*}
\end{proof}
\begin{cor}
\label{cor:starcor}
If $\Phi\in MW^c(\Delta_{n+1})$ and $\mathcal{F}\in \mathcal{L}^{n+1}_{(c)}$, then we have
$\Phi(\mathcal{F})=\deg(\operatorname{res}_{\mathcal{F}}(\Phi))$.
\end{cor}
\section{The nef divisors $\gamma_k$}
\label{AlphaSec1}
In this section we define certain classes $\gamma_k\in MW^1(\Delta_{n+1})$ which generalize the classes $\alpha_M$ and $\beta_M$ used by Huh and Katz and later Adiprosito, Huh and Katz \cite{HK12,AHK18} to compute coefficients of the characteristic polynomial of a matroid. The $\gamma_i$ will eventually be related to mixed Eulerian numbers and the Tutte polynomial of a matroid.
\begin{defn}
For $1 \leq k \leq n$ define the hypersimplex $P_{k,n+1}$ to be the convex hull in $\mathbf{R}^{n+1}$ of the set $\{ e_F : F \subset \{0,\ldots,n\}\text{ and } |F| = k\}$.
\end{defn}
It is well known that the vertices of $P_{k,n+1}$ are exactly those $e_F$ with  $|F| = k$.
\begin{defn}
Define $\gamma_k\in MW^1(\Delta_{n+1})$ to be the Minkowski weight corresponding to  $$P_{k,n+1}-v\subset \{\sum x_i=0\}$$
where $v\in \{\sum x_i=k\}\cap \mathbb{Z}^{n+1}$ is any vector.
\end{defn}
We will need the following even more explicit description of the Minkowski weight $\gamma_k$.
\begin{thm}\label{thm:gammakmink} For $\mathcal{F}\in \mathcal{L}^{n+1}_{(n-1)}$ we have
\[\gamma_k(\sigma_{\mathcal{F}})=\begin{cases}0 &\exists F\in \mathcal{F}\text{ with }|F|=k\\1&\text{ otherwise.}\end{cases}\]
\end{thm}
\begin{proof}
Because the edges of $P_{k,n+1}$ are translates of $e_i-e_j$ for various $i,j$, they all have normalized volume $1$, so all weights of $\gamma_k$ are $0$ or $1$.

The maximal cones $\sigma_{\mathcal{F}}\in \Delta_{n+1,(n)}$ taking maximal value on the vertex $e_A$ are precisely those with $A\in \mathcal{F}$. Hence the cones $\sigma_{\mathcal{F}}\in \Delta_{n+1,(n-1)}$ taking maximal value on the edge $e_{B\cup i}e_{B\cup j}$ are those cones which are subcones of both a cone in $\Delta_{n+1,(n)}$ taking a maximal values at $e_{B\cup i}$, and a cone in $\Delta_{n+1,(n)}$ taking a maximal value at $e_{B\cup j}$. This happens precisely when $\mathcal{F}\in \mathcal{L}^{n+1}_{(n-1)}$ contains both $B$ and $B\cup \{i,j\}$, but no set of size $k$. Ranging over subsets $B$ and elements $i,j\not \in B$, the $\mathcal{F}$ range over all chains in $\mathcal{L}^{n+1}_{(n-1)}$ not containing a set of size $k$, and we obtain the desired description of the Minkowski weight.
\end{proof}

\begin{rmk}
The nef divisors $\alpha_M$ and $\beta_M$ defined in \cite[Definition 5.7]{AHK18}, which are used to deduce the log-concavity of the reduced characterisitic polynomial of a matroid, can be described as image in $A^\bullet(M)$  of $\gamma_n$ and $\gamma_1$, respectively, under the composite map $MW^\bullet(\Delta_{n+1})\cong A^\bullet(U_{n+1})\to A^\bullet(M)$.
\end{rmk}

Using this, we can describe how $\gamma_i$ classes restrict under the map from \Cref{starprop}.
\begin{lem}
\label{restrict}
For $\mathcal{F}\in \mathcal{L}^{n+1}_{(c)}$, the restriction $\operatorname{res}_{\mathcal{F}}(\gamma_k)$ is given in the ring  $$MW^\bullet(\Delta_{F_1})\otimes MW^\bullet(\Delta_{F_2\setminus F_1})\otimes \cdots \otimes MW^\bullet(\Delta_{\{0,\ldots,n\}\setminus F_c}),$$ by the formula,
$$\operatorname{res}_{\mathcal{F}}(\gamma_k)=\begin{cases}0&k=|F_j|\text{ for some $0\le j\le c$}\\
1^{\otimes j}\otimes \gamma_{k-|F_j|}\otimes 1^{c-j}& |F_j|<k<|F_{j+1}|\text{ for some $0\le j \le c$.}\end{cases}.$$
\end{lem}

\begin{proof}
Let $\mathcal{G}_i\in \mathcal{L}^{F_{i+1}\setminus F_i}_{(g_i)}$ with $\sum g_i=n-c-1$. We have
$$\operatorname{res}_{\mathcal{F}}(\gamma_k)(\bigotimes_{i=0}^c \sigma_{\mathcal{G}_i})=\gamma_k(\sigma_{\mathcal{H}})=\begin{cases}0&\exists H\in \mathcal{H}\text{ with }|H|=k\\1&\text{otherwise}\end{cases}$$
where $\mathcal{H}=\mathcal{F}\cup \bigcup_{i=0}^c \{G\cup F_i: G\in \mathcal{G}_i\}\in \mathcal{L}^{n+1}_{(n-1)}$. Immediately from this we see that if $k=|F_j|$ for some $j$ then $\gamma_k(\sigma_{\mathcal{H}})=0$. Otherwise, suppose we have $|F_j|<k<|F_{j+1}|$ for some (unique) $0\le j \le c$. Because the elements in $\{G\cup F_i:G\in \mathcal{G}_i\}$ have sizes between $|F_i|$ and $|F_{i+1}|$, the condition that there exists a set in $\mathcal{H}$ of size $k$ is exactly the same as the condition that there exists a set in $\mathcal{G}_j$ of size $k-|F_j|$, and the result follows.
\end{proof}

\end{comment}

\section{Products as Minkowski weights and mixed Eulerian numbers}
\label{AlphaSec2}
In this section, we describe arbitrary products of $\gamma_k$ classes as Minkowski weights on $\Delta_{n+1}$. The values taken by these Minkowski weights are products of mixed Eulerian numbers, a direct consequence of the fact that the divisor $\gamma_k$ corresponds to the $k$th hypersimplex.
\subsection{Mixed Eulerian numbers, $\gamma_i$ products, and symmetrized Minkowski weights}
First we recall mixed Eulerian numbers \cite[Section~16]{Postnikov} and use them to define a certain ``cyclic shift'' generating function.
\begin{defn}
The \emph{mixed Eulerian number} $A_{a_1,\ldots,a_k}\in \mathbb{Z}_{>0}$ is defined to be
$$n!\operatorname{mvol}(P_{1,n+1}-v_1,\ldots,P_{1,n+1}-v_1,\ldots,P_{k,n+1}-v_k,\ldots,P_{k,n+1}-v_k)$$
where $v_k\in \{(x_0,\ldots,x_n)\in \mathbb{Z}^{n+1}:x_0+\cdots+x_n=k\}$, $n=\sum a_i$, mixed volume is normalized with respect to $\{\sum x_i=0\}\cap \mathbb{Z}^{n+1}$, and $P_{i,n+1}-v_i$ appears $a_i$ times.
\end{defn}


\begin{lem}
\label{toppoweralphak}
If $r_1+\dots+r_n=n$, then $$\deg(\prod \gamma_i^{r_i})=A_{r_1,\ldots,r_n}.$$
\end{lem}
\begin{proof}
The result follows from the definition of the mixed Eulerian numbers as a mixed volume of hypersimplices and the definition of $\gamma_i$.
\end{proof}
For our later considerations, it will be crucial to understand products $\prod \gamma_i^{r_i}\in MW^r(\Delta_{n+1})$ where $r=\sum r_i$ may be smaller than $n$. To express our results, we now construct a basis for the $S_{n+1}$-invariant part of $MW^r(\Delta_{n+1})$ indexed by sequences $S:=1\le s_1 < s_2 <\ldots < s_c \le n$ as follows.
\begin{defn}
Let $S=\{s_1<\cdots<s_c\}\subset \{1,\ldots,n\}$. Define $\mathcal{L}^{n+1}_{(c),S}=\{\mathcal{F}\in \mathcal{L}^{n+1}_{(c)}: |F_i|=s_i\text{ for all $i$}\}$, and define the function $\delta_S=\delta_{s_1,\ldots,s_c}:\Delta^{n+1}_{(c)}\to \mathbb{Z}$ by $$
\delta_S(\sigma_{\mathcal{F}})=\begin{cases}1 & \mathcal{F}\in \mathcal{L}^{n+1}_{(c),S}\\0& \text{otherwise.}\end{cases}
$$
We call $\delta_S$ a \emph{symmetrized Minkowski weight}.
\end{defn}

For example, by \Cref{thm:gammakmink} we have $\gamma_j=\delta_{1,\ldots,k-1,k+1,\ldots,n}$.
\begin{prop}
The $\delta_S$ are Minkowski weights, and for $|S|=c$ are a basis for the abelian group $MW^{n-c}(\Delta_{n+1})^{S_{n+1}}$.
\end{prop}
\begin{proof}
Provided the $\delta_S$ are Minkowski weights, they clearly generate the group $MW^{n-c}(\Delta_{n+1})^{S_{n+1}}$ with no relations between them. To verify the balancing condition, it suffices to show that for every $\mathcal{F}\in \mathcal{L}^{n+1}_{(c)}$, and for each $t$ and $0\le \ell\le c$ we have
$$\sum_{|A|=t,F_\ell \subset A \subset F_{\ell+1}}e_{A} \in N_{\sigma_{\mathcal{F}}}.$$
 By symmetry this sum is clearly a linear combination of $e_{F_\ell}$ and $e_{F_{\ell+1}}$, which both lie in $N_{\sigma_{\mathcal{F}}}$, so the result follows.
\end{proof}
\begin{defn}
For a sequence of natural numbers $r_1,\ldots,r_n$ with $\sum_{i=1}^nr_i=r$, say that another sequence $1\le s_1\le \cdots \le s_{n-r}\le n$ is \emph{adapted} to the sequence $r_1,\ldots,r_n$ if for all $0\le j\le n-r$ we have
$$\sum_{s_j<k<s_{j+1}}r_k=\#\{k:s_j<k<s_{j+1}\}=s_{j+1}-s_j-1,$$
where we set by convention $s_0=0$ and $s_{n-r}=n+1$.
\end{defn}
\begin{rmk}
\label{rmk:adapted}
With the setup as in the above definition, for adaptedness to hold we must have $r_{s_j}=0$ for $1\le j \le n-r$. Indeed, summing all of the adaptedness equalities, we have
$$\sum_{k \not \in \{s_1,\ldots,s_{n-r}\}} r_k=\sum_{j=1}^{n-r}(s_{j+1}-s_j-1)=(n+1)-(n-r+1)=r=\sum_{k=1}^n r_k.$$
\end{rmk}

\begin{prop}
\label{prop:gammaproduct}
If $r_1+\ldots+r_n=r\le n$ then
$$\prod_{i=1}^n\gamma_i^{r_i}=\sum(\prod_{j=0}^{n-r}A_{r_{s_j+1},\ldots,r_{s_{j+1}-1}})\delta_{s_1,\ldots,s_{n-r}}$$
where the sum is over all sequences $1\le s_1<\cdots <s_{n-r}\le n$ adapted to $r_1,\ldots,r_n$ where we set by convention $s_0=0$ and $s_{n-r+1}=n+1$, and $A_{\emptyset}=1$. In particular, the coefficient of $\delta_{s_1,\ldots,s_{n-r}}$ is zero if $r_{s_j}\ne 0$ for any $1\le i \le n-r$.
\end{prop}
\begin{proof}
Because the left hand side is $S_{n+1}$-invariant, we can write it as a $\mathbb{Z}$-linear combination
$$\sum_{1\le s_!<\cdots <s_{n-r}\le n}c_{s_1,\ldots,s_{n-r}}\delta_{s_1,\ldots,s_{n-r}}.$$
By \Cref{cor:starcor} and \Cref{restrict}, if $\mathcal{G}\in \mathcal{L}^{n-1}_{(n-r)}$ has $|G_i|=s_i$ for all $i$, then
\begin{align*}c_{s_1,\ldots,s_{n-r}}&=(\prod_{i=1}^{n} \gamma_i^{r_i})(\sigma_{\mathcal{G}})\\&=\deg \operatorname{res}_{\mathcal{G}}(\prod_{i=1}^n \gamma_i^{r_i})=\deg \prod_{i=1}^n \operatorname{res}_{\mathcal{G}}(\gamma_i)^{r_i}\\&=\deg ((\bigotimes_{j=0}^{n-r-1} \gamma_0^{r_{s_j+1}}\cdots \gamma_{s_j-s_{j-1}}^{r_{s_{j+1}-1}}0^{r_{s_{j+1}}})\otimes (\gamma_0^{r_{s_{n-r}+1}}\cdots \gamma_{n+1-s_{n-r}}^{r_n}))\end{align*}
where by convention $0^0=1$ and $0^r=0$ for $r\ge 1$, and the degree is computed in $\bigotimes_{j=0}^{n-r} MW^\bullet(\Delta_{G_{j+1}\setminus G_j})$.
Because $$\sum_{j=0}^{n-r} \dim \Delta_{G_{j+1}\setminus G_j}=\sum_{j=0}^{n-r} (s_{j+1}-s_j-1)=(n+1)-(n-r+1)=r=\sum_{i=1}^n r_i,$$ if $s_1,\ldots,s_{n-r}$ is not adapted to $r_1,\ldots,r_n$ then for some $j$ we have that either $r_{s_j}=0$ for some $j$, or $\sum_{s_j<k<s_{j+1}} r_k>\dim \Delta_{G_{j+1}\setminus G_j}$ which again makes the product zero. Otherwise if  $s_1,\ldots,s_{n-r}$ is adapted to $r_1,\ldots,r_n$ then $r_{s_j}=0$ for $1\le j \le n-r$ by \Cref{rmk:adapted}, and the sum of the remaining exponents in the $j$'th factor is equal to $\dim \Delta_{G_{j+1}\setminus G_j}$ for all $j$. The result then follows from \Cref{toppoweralphak} applied to each factor.
\end{proof}
\begin{exmp}
Suppose that $n=4$ and we multiply $\gamma_2^2\gamma_4 $ in $A^{\bullet}(X(\Delta_5))$. Then $(r_1,r_2,r_3,r_4)=(0,2,0,1)$, $r=3$, and we can determine the coefficients of
\begin{align*}
     \gamma_2^2\gamma_4=c_1\delta_{1}+c_2\delta_{2}+c_3 \delta_{3}+c_4\delta_{4}.
\end{align*}
The sequences $(s_1)$  that are adapted to $(0,2,0,1)$ are $(1)$ and $(3)$. It follows that $c_2 = c_4 =0$, while $c_1 = A_{\emptyset}A_{r_2,r_3,r_4}=A_{2,0,1} = 3$ and $c_3 = A_{r_1,r_2}A_{r_4}=A_{0,2}A_1=1$.
%
\end{exmp}

\begin{cor}
\label{cor:deltaSasprodofalphai}
For $S=\{s_1<\cdots < s_c\}\subset \{1,\ldots,n\}$, $$\delta_S=\frac{1}{(s_1-1)!(s_2-s_1-1)!(s_3-s_2-1)!\ldots (n-s_c)!}\prod_{i\not \in S}\gamma_i\in MW^{n-r}(\Delta_{n+1})_{\mathbb{R}}.$$
In particular, (setting $\gamma_0=\gamma_{n+1}=0$ by convention)
\begin{itemize}
    \item We have $\deg(\gamma_1\cdots \gamma_n)=n!$, and for $1\le 1 \le n$ we have $\gamma_i^2=\frac{1}{2}(\gamma_{i}\gamma_{i+1}+\gamma_i\gamma_{i-1})$.
\item If $i\in S$, then with $\{a,\ldots,b\}$ the largest interval with $S\cap \{a,\ldots,b\}=\{i\}$, we have $$\gamma_i\delta_S=\frac{(b-a+1)!}{(i-a)!(b-i)!}\delta_{S\setminus i}.$$
\item If $i\not\in S$, then with $\{a,\ldots,b\}$ the largest interval containing $i$ which is disjoint from $S$ we have $$\gamma_i\delta_S=(\frac{b+1-i}{b-a+2}\gamma_{a-1}+\frac{i-a+1}{b-a+2}\gamma_{b+1})\delta_{S},$$ reducing to the previous case.
\end{itemize}
\end{cor}
\begin{proof}
The only sequence adapted to  $r_1,\ldots,r_n$ where $r_i=1$ when $i\in \{s_1,\ldots,s_c\}$ and $i=0$ otherwise is the sequence $s_1,\ldots,s_c$ itself. Hence we have $\prod_{i\not\in \{s_1,\ldots,s_c\}}\gamma_i=(\prod_{i=0}^c A_{1^{s_{i+1}-s_i-1}})\delta_S$ where we set $s_0=0$ and $s_{c+1}=n+1$ by convention, and $A_{1^{r}}=r!$ \cite[Theorem 16.3(7)]{Postnikov}. Using this, we deduce the formulas as follows.
\begin{itemize}
    \item $\deg(\gamma_1\cdots \gamma_n)=n!\deg(\delta_{\emptyset})=n!$. The only sequences adapted to the sequence $r_1=\cdots=r_{i-1}=0$, $r_i=2$, $r_{i+1}=\cdots = r_n=0$ are the sequences $S_1=\{1<\cdots<i-2<i+1<\cdots<n\}$ and $S_2=\{1<\cdots<i-1<i+2<\cdots<n\}$. Because $A_{0,2}=A_{2,0}=1$, $\gamma_i\gamma_{i-1}=\frac{1}{2}\delta_{S_1}$, and $\gamma_i\gamma_{i+1}=\frac{1}{2}\delta_{S_2}$, we obtain $\gamma_i^2=\frac{1}{2}(\gamma_{i}\gamma_{i+1}+\gamma_i\gamma_{i-1})$.
    \item This immediately follows by writing $\delta_S=c_S\prod_{j\not \in S}\gamma_j$ and $\delta_{S\setminus i}=c_{S\setminus i}\prod_{i\ne j\not \in S}\gamma_j$ and comparing the constants.
    \item By the first point, for $a \le i \le b$ and the expression $\delta_S=c\prod_{j\not\in S}\gamma_j$ we have the linear relations $\delta_S\gamma_i=\frac{1}{2}(\delta_S\gamma_{i-1}+\delta_S\gamma_{i+1})$. This is solved uniquely by the arithmetic progression $\delta_S\gamma_i=\frac{b+1-i}{b-a+2}\delta_S\gamma_{a-1}+\frac{i-a+1}{b-a+2}\delta_S\gamma_{b+1}$ interpolating from $\delta_S\gamma_{a-1}$ to $\delta_S\gamma_{b+1}$.
\end{itemize}
\end{proof}

\subsection{Contiguous $\gamma_i$ products and one-window symmetrized Minkowski weights}
Of particular interest for us are contiguous products of $\gamma_i$, as they can be expressed as linear combinations of what we call ``one-window symmetrized Minkowski weights'', which we will later show compute the coefficients of $T_M(1,y)$ when intersecting with $[\Delta_M]$.

\begin{defn}
For $0\le k \le n-r$, define $$\Phi_{r,k}=\delta_{1,\ldots,k,k+r+1,\ldots,n}\in MW^r(\Delta_{n+1}).$$ We call $\Phi_{r,k}$ a \emph{one-window} symmetrized Minkowski weight.
\end{defn}
For example, $\gamma_k=\Phi_{1,k-1}$ by \Cref{thm:gammakmink}. Our next result follows from \Cref{cor:deltaSasprodofalphai}.
\begin{cor}
\label{cor:OneWindowasprodofalphai}
For $0\le k \le n-r$, if $k\le i\le k+r+1$ then we have $$\gamma_i \Phi_{r,k}=
(k+r+1-i)\Phi_{r+1,k-1}+(i-k)\Phi_{r+1,k}$$
where we set the first term to be zero when $k=0$ and the second term to be zero when $k=n-r$.
\end{cor}

To package the computations for contiguous products of $\gamma_i$ classes, we introduce the following polynomial encoding the mixed Eulerian numbers for the shifts of a sequence.
\begin{defn}
For $a_1,\ldots,a_k \ge 1$ with $\sum a_i=r$, we define the \emph{mixed Eulerian polynomial} as
$$A_{a_1,\ldots,a_k}(y)=\sum_{i=0}^{r-k}A_{0^i,a_1,\ldots,a_k,0^{r-k-i}}y^i.$$
\end{defn}

When $k=1$ the following is a known formula for the Eulerian polynomial, as mentioned in the introduction.

\begin{thm}\label{thm:mixedEuleriangenerating} For $a_1,\ldots,a_k \ge 1$ with $\sum a_i=r$ we have
$$A_{a_1,\ldots,a_k}(y)=(1-y)^{r+1}\sum_{i=0}^{\infty}(i+1)^{a_1}(i+2)^{a_2}\ldots (i+k)^{a_k} y^i.$$
\end{thm}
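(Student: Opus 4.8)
The plan is to identify a recursion that both sides of the asserted identity satisfy, and then match a common base case. Throughout set $r=a_1+\dots+a_k$ and write $E_{a_1,\dots,a_k}(y):=(1-y)^{r+1}\sum_{i\ge 0}(i+1)^{a_1}(i+2)^{a_2}\cdots(i+k)^{a_k}y^i$ for the right-hand side (a polynomial, since the summand is a polynomial of degree $r$ in $i$); the goal is $A_{a_1,\dots,a_k}(y)=E_{a_1,\dots,a_k}(y)$. The base cases are immediate: $E_{1^k}(y)=(1-y)^{k+1}\sum_i (i+1)(i+2)\cdots(i+k)y^i=(1-y)^{k+1}\,k!/(1-y)^{k+1}=k!$, while $A_{1^k}(y)=A_{1^k}=k!$ as recalled above — equivalently $A_{0^p,1^k}=\deg_{X(\Delta_{k+1})}(\gamma_{p+1}\cdots\gamma_{p+k})$ vanishes for $p>0$ and equals $A_{1^k}=k!$ for $p=0$ by \Cref{toppoweralphak}. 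Since every composition with positive parts is reached from $(1^k)$ by repeatedly adding $1$ to a part, it then suffices to check that $A_a(y)$ and $E_a(y)$ transform in the same way under $a\mapsto a'$, where $a'=(a_1,\dots,a_j+1,\dots,a_k)$.

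The main step is the recursion for mixed Eulerian numbers
$$A_{0^p,a_1,\dots,a_j+1,\dots,a_k}=(p+j)\,A_{0^p,a_1,\dots,a_k}+(r+2-j-p)\,A_{0^{p-1},a_1,\dots,a_k},$$
valid for $1\le j\le k$ and $p\ge 0$, with the convention that an $A$-symbol with a negative superscript, or with more subscripts than their sum, is $0$. I would prove this entirely inside $A^\bullet(X(\Delta_{r+2}))$. By \Cref{toppoweralphak} the left side is $\deg_{X(\Delta_{r+2})}\!\big(\gamma_{p+j}\cdot\gamma_{p+1}^{a_1}\cdots\gamma_{p+k}^{a_k}\big)$; by \Cref{alphapower} the degree-$r$ class $\gamma_{p+1}^{a_1}\cdots\gamma_{p+k}^{a_k}\in A^r(X(\Delta_{r+2}))$ equals $A_{0^p,a_1,\dots,a_k}\,\Phi_{r,0}+A_{0^{p-1},a_1,\dots,a_k}\,\Phi_{r,1}$ with $\Phi_{r,0}=\delta'_{[1,r]}$ and $\Phi_{r,1}=\delta'_{[2,r+1]}$. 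It remains to compute, using the $\gamma_i\cdot\delta'_S$ multiplication rules recorded at the start of this appendix, that $\deg(\gamma_{p+j}\,\Phi_{r,0})=p+j$ and $\deg(\gamma_{p+j}\,\Phi_{r,1})=r+2-j-p$: since $1\le p+j\le r$, each rule produces two boundary terms, and because $\gamma_0=\gamma_{r+2}=0$ in $X(\Delta_{r+2})$ one of them drops out and the other collapses to a multiple of $\delta'_{[1,r+1]}$, which has degree $1$.

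Given the recursion, multiply by $y^p$ and sum. Writing $A_a(y)=\sum_p A_{0^p,a_1,\dots,a_k}y^p$, one has $\sum_p(p+j)A_{0^p,a}y^p=yA_a'(y)+jA_a(y)$ and $\sum_p(r+2-j-p)A_{0^{p-1},a}y^p=y\big((r+1-j)A_a(y)-yA_a'(y)\big)$, which combine to
$$A_{a_1,\dots,a_j+1,\dots,a_k}(y)=y(1-y)\,A_a'(y)+\big(j+(r+1-j)y\big)\,A_a(y).$$
The same computation applies to $E$: with $H(y):=\sum_i (i+1)^{a_1}\cdots(i+k)^{a_k}y^i=E_a(y)/(1-y)^{r+1}$ we get $\sum_i (i+j)(i+1)^{a_1}\cdots(i+k)^{a_k}y^i=yH'(y)+jH(y)$, and multiplying by $(1-y)^{r+2}$ and simplifying yields $E_{a_1,\dots,a_j+1,\dots,a_k}(y)=y(1-y)E_a'(y)+(j+(r+1-j)y)E_a(y)$ — the identical recursion. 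Together with the common base case, induction on $r$ finishes the proof.

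The hard part is the intersection-theoretic computation in the second paragraph, and within it the handling of the boundary cases $p=0$, where the $\Phi_{r,1}$-term of the expansion is absent (consistent with $A_{0^{-1},\dots}=0$), and $p=r-k$, where $A_{0^p,a_1,\dots,a_k}$ already vanishes; carrying everything out in $A^\bullet(X(\Delta_{r+2}))$ and invoking $\gamma_0=\gamma_{r+2}=0$ makes these degenerate contributions disappear on their own, after which only formal generating-function manipulations remain.
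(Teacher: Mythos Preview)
Your argument is correct and is essentially the paper's own proof: both establish the recursion $A_{0^p,a',\,}= (p+j)A_{0^p,a}+(r+2-j-p)A_{0^{p-1},a}$ inside $A^\bullet(X(\Delta_{r+2}))$ by expanding $\gamma_{p+1}^{a_1}\cdots\gamma_{p+k}^{a_k}$ as a combination of $\Phi_{r,0}$ and $\Phi_{r,1}$ and then applying the $\gamma_i\cdot\delta'_S$ rules, and both translate this into the same first-order differential recursion for the generating polynomials. The only cosmetic differences are your choice of inductive scheme (base case $(1^k)$ and increment any part, versus the paper's base case $(1)$ with increment-last-part plus append-a-one) and a harmless off-by-one in your boundary discussion (the degenerate upper endpoint is $p=r+1-k$, not $p=r-k$, and $p+j$ can reach $r+1$).
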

\begin{rmk}
Nadeau and Tewari independently observed this identity, which later appeared in \cite[Proposition 4.5]{NadeauTewari}. For $k=1$ this recovers the known identity for Eulerian polynomials $\frac{A_r(y)}{(1-y)^{r+1}}=\sum_{i=0}^{\infty} (i+1)^r y^i$.
\end{rmk}
\begin{proof}
The result is true for $k=1$ and $a_1=1$, so suppose now that we know the result for $(a_1,\ldots,a_k)$. It then suffices to show the result for $(a_1,\ldots,a_{k-1},a_k+1)$ and $(a_1,\ldots,a_k,1)$.

The latter case is proved identically to the former case, so we omit the proof. In the former case, writing $a_1+\dots+a_k=r$, this is saying
$$\frac{A_{a_1,\ldots,a_k+1}(y)}{(1-y)^{r+2}}=y^{-k+1}\frac{d}{dy}\left(y^{k}\frac{A_{a_1,\ldots,a_k}(y)}{(1-y)^{r+1}}\right).$$
Rearranging, this is
$$A_{a_1,\ldots,a_{k}+1}(y)=(k(1-y)+(r+1)y)A_{a_1,\ldots,a_k}(y)+y(1-y)\frac{d}{dy}A_{a_1,\ldots,a_k}(y).$$
Taking the $x^i$ coefficient of both sides, we need to show that
$$A_{0^i,a_1,\ldots,a_k+1,0^{r+1-k-i}}=(k+i)A_{0^i,a_1,\ldots,a_k,0^{r-k-i}}+(r+2-(k+i))A_{0^{i-1},a_1,\ldots,a_k,0^{r+1-k-i}}.$$
Now, we note that in $A^\bullet(\Delta_{r+2})$ we have by \Cref{alphapower} that
$$\gamma_{i+1}^{a_1}\ldots \gamma_{i+k}^{a_k}=A_{0^i,a_1,\ldots,a_k,0^{r-k-i}}\Phi_{r,0}+A_{0^{i-1},a_1,\ldots,a_k,0^{r+1-k-i}}\Phi_{r,1}.$$
Multiplying both sides by $\gamma_{i+k}$, we conclude by taking the degrees of both sides, using \Cref{toppoweralphak} for the left hand side and \Cref{cor:OneWindowasprodofalphai} for the right hand side.
\end{proof}

Finally, the following is a corollary of \Cref{prop:gammaproduct}.
\begin{cor}
\label{alphapower}
For $r_{1},\ldots,r_k\ge 1$ with $\sum_{i=1}^k r_i=r$ we have in the ring $MW^\bullet(\Delta_{n+1})[y]$ that
\begin{align*}\sum_{i=0}^{n-k}\gamma_{1+i}^{r_1}\ldots\gamma_{k+i}^{r_k}y^i&=\left(\sum_{i=0}^{n-r}\Phi_{r,i}y^i\right)A_{r_{1},\ldots,r_{k}}(y).\end{align*}
\end{cor}
\begin{proof}
By comparing coefficients, we see that the statement is equivalent to showing
for $a\le b$ and $1\le r_a',\ldots,r_b'$ with $r=\sum_{i=a}^b r_i'$, we have
$$\prod_{i=a}^b \gamma_i^{r_i'}=\sum_{\{a,\ldots,b\}\subset \{i+1,\ldots,i+r\}}A_{0^{a-(i+1)},r_a',\ldots,r_b',0^{(i+r)-b}}\Phi_{r,i}.$$

Extend $r_a',\ldots,r_b'$ to a sequence $r_1',\ldots,r_n'$ by setting $r_i'=0$ if $i\not\in \{a,\ldots,b\}$. We may now apply \Cref{prop:gammaproduct}, obtaining an expression
$$\prod_{i=a}^b \gamma_i^{r_i'}=\sum c_{s_1,\ldots,s_{n-r}}\delta_{s_1,\ldots,s_{n-r}}$$
where the sum is over sequences $1 \le s_1 < \cdots < s_{n-r}\le n$ adapted to $r_1',\ldots,r_n'$. By \Cref{rmk:adapted}, we know that $s_1,\ldots,s_{n-r}\not\in \{a,\ldots,b\}$. If for some $0\le j \le n-r$ we have either $s_j,s_{j+1}\in \{0,\ldots,a-1\}$ or $s_j,s_{j+1}\in \{b+1,\ldots,n+1\}$, then $$0=\sum_{s_j<k<s_{j+1}}r_j'=s_{j+1}-s_j-1$$
and hence $s_{j+1}=s_j+1$. Therefore all but one of the differences $s_{j+1}-s_j$ is required to equal $1$, so the only possibility for an adapted sequence is if $\{s_1,\ldots,s_{n-r}\}=\{1,\ldots,i\}\sqcup \{i+r+1,\ldots, n\}$ for some $0\le i \le n-r$, and so $\delta_S=\Phi_{r,i}$. The only $1\le j \le n-r$ where adaptedness could possibly fail is when $j=i$ (so $s_j=i$ and $s_{j+1}=i+r+1$), and we see that
$$\sum_{i<k<i+r+1}r_k'=r=\sum_{i=0}^b r_i',$$ which happens precisely when $\{a,\ldots,b\}\subset \{i+1,\ldots,i+r\}$. Finally, for such an $S$, $$c_{s_1,\ldots,s_{n-r}}=\prod_{j=0}^{n-r} A_{r_{s_j+1}',\ldots,r_{s_{j+1}'-1}}=A_{\emptyset}^{n-r}A_{r_{i+1}',\ldots,r_{i+r}'}=A_{0^{a-i-1},r_a',\ldots,r_b',0^{(i+r)-b}}.$$
\end{proof}

\section{The sliding sets problem}\label{sec:sliding sets}
Let $S=\{s_1<\cdots < s_{n-r}\}\subset \{1,\ldots,n\}$ and set by convention $s_0=0$ and $s_{n-r+1}=n+1$. In this section, we introduce a combinatorial framework that we will use to compute degrees $\deg(\Psi \cdot \delta_S)$ for a Minkowski weight $\Psi\in MW^{n-r}(\Delta_{n+1})$.

\subsection{The setup of sliding sets}
Choose a generic vector $v=(v_0,\ldots,v_n)\in N_{\mathbb{R}}$ (in particular, with $v_i\ne v_j$ for all $i,j$). The idea is that we will partition the set $\{v_0,\ldots,v_n\}$ according to the partition $\bigsqcup_{i=0}^r F_{i+1}\setminus F_{i}=\{0,\ldots,n\}$, and then count how many ways we can ``slide'' these sets to the right so that the amount a set moves is decreasing in $i$, and the resulting points group up in a way determined by $S$.

\begin{defn}
Define $\operatorname{Mult}_m(\mathbb{R})$ to be the set of all total multiplicity $m$ multisets in $\mathbb{R}$.
For a finite multiset $T$ of $\mathbb{R}$, write $\operatorname{supp}(T)=\{x_0>\cdots > x_k\}$ for the underlying set. If $x_i$ has multiplicity $a_i$, we write $\operatorname{mult}_{\to}(T)=(a_k,\ldots,a_0)$ for the multiplicities read  forwards on the real line.
\end{defn}
\begin{defn}
For $\mathcal{F}\in \mathcal{L}^{n+1}_{(r)}$ and $i=0,\ldots,r$, let $H_{i}=H_i^v(\mathcal{F})=\{v_j\}_{j\in F_{i+1}\setminus F_{i}}\in \operatorname{Mult}_{|F_{i+1}\setminus F_{i}|}(\mathbb{R})$ (where by convention $F_0=\emptyset$ and $F_{r+1}=\{0,\ldots,n\}$).
\end{defn}
\begin{defn}
For $\mathcal{F}\in \mathcal{L}^{n+1}_{(r)}$, let $f^v_S(\mathcal{F})$ be the number of solutions to the ``sliding sets problem'' for $\mathcal{F}$ and $S$. That is, $f^v_S(\mathcal{F})$ is the number of choices of real numbers $t_0>\cdots>t_{r}=0$ such that $$\operatorname{mult}_{\to}(\bigcup (H_i+t_i))=(s_{n-r+1}-s_{n-r},\ldots,s_1-s_0).$$
\end{defn}
\begin{exmp}
Let $v=(1,0,3,5,10,11,12.5)$, $S=2<3<4<6$ and $\mathcal{F}\in \mathcal{L}^7_{(2)}$ with $F_1=\{1\}$, $F_2=\{1,0,2,4\}$. Then $H_0=\{0\}$, $H_1=\{1,3,10\}$, and $H_2=\{5,11,12.5\}$. We want to slide the sets $H_0,H_1,H_2$ to the right so that $H_0$ is displaced right the greatest, $H_1$ is displaced right less so and $H_2$ is stationary, and we want the final multiplicities after sliding to be $(s_5-s_4,s_4-s_3,s_3-s_2,s_2-s_1,s_1-s_0)=(1,2,1,1,2)$.
We depict the $H$'s situated initially as follows, with $H_0$ black (on the line), $H_1$ red (above), and $H_2$ blue (below).
\begin{center}
\begin{tikzpicture}
\draw (-0.5,0) -- (10.5,0);
\filldraw (0,0) circle (2pt);
\filldraw[red] (0.5,0.2) circle (2pt);
\filldraw[red] (1.5,0.2) circle (2pt);
\filldraw[red] (5,0.2) circle (2pt);
\filldraw[blue] (2.5,-0.2) circle (2pt);
\filldraw[blue] (5.5,-0.2) circle (2pt);
\filldraw[blue] (6.25,-0.2) circle (2pt);
\end{tikzpicture}
\end{center}
None of the pairs of differences of elements in $H_1$ equal a pair of difference in $H_2$, so after sliding we must have one of the points of multiplicity $2$ being a red/blue point, and the other point containing black.

The valid slides are
$(t_0,t_1,t_2)\in\{(5,2.5,0),(12.5,2,0),(20,10,0)\}$, yielding the following final configurations:
\begin{center}
\begin{tikzpicture}
\draw (-0.5,0) -- (10.5,0);
\draw (0,0) circle (0pt);
\filldraw (2.5,0) circle (2pt);
\filldraw[red] (1.75,0.2) circle (2pt);
\filldraw[red] (2.75,0.2) circle (2pt);
\filldraw[red] (6.25,0.2) circle (2pt);
\filldraw[blue] (2.5,-0.2) circle (2pt);
\filldraw[blue] (5.5,-0.2) circle (2pt);
\filldraw[blue] (6.25,-0.2) circle (2pt);

\draw (-0.5,-1) -- (10.5,-1);
\filldraw (6.25,-1) circle (2pt);
\filldraw[red] (1.5,-0.8) circle (2pt);
\filldraw[red] (2.5,-0.8) circle (2pt);
\filldraw[red] (6,-0.8) circle (2pt);
\filldraw[blue] (2.5,-1.2) circle (2pt);
\filldraw[blue] (5.5,-1.2) circle (2pt);
\filldraw[blue] (6.25,-1.2) circle (2pt);

\draw (-0.5,-2) -- (10.5,-2);
\filldraw (10,-2.0) circle (2pt);
\filldraw[red] (5.5,-1.8) circle (2pt);
\filldraw[red] (6.5,-1.8) circle (2pt);
\filldraw[red] (10,-1.8) circle (2pt);
\filldraw[blue] (2.5,-2.2) circle (2pt);
\filldraw[blue] (5.5,-2.2) circle (2pt);
\filldraw[blue] (6.25,-2.2) circle (2pt);
\end{tikzpicture}
\end{center}
Therefore $f^v_S(\mathcal{F})=3$.
\end{exmp}
\begin{prop}
\label{prop:finitefs}
$f^v_S(\mathcal{F})$ is always finite.
\end{prop}
\begin{proof}
Suppose $\operatorname{mult}_{\to}(\bigcup (H_i+t_i))=(s_{n-r+1}-s_{n-r},\ldots,s_1-s_0)$. Fix a spanning tree $T_i$ on $H_i$ for each $i$. Consider the graph $G=\bigcup_i (T_i+t_i)$ on $\operatorname{supp}(\bigcup_i (H_i+t_i))$. By the genericity of $v$, $|(H_i+t_i)\cap (H_j+t_j)|\le 1$ for all $i\ne j$, so the trees $T_i+t_i$ are are edge-disjoint. By genericity again, $G$ cannot have a cycle, and so is a forest. This forest has $\sum_{i=0}^r (|H_i|-1)=n-r$ edges, and vertex set of size $n-r+1$, so is a spanning tree. But then just from the combinatorial data of the spanning trees $T_i$ and which pairs $(v_k,v_\ell)\in T_i\times T_j$ have $v_k+t_i=v_\ell+t_j$, we can recover all of the real numbers $t_i$ since $t_{r}=0$ and $G$ is connected.
\end{proof}
\begin{rmk}
In \Cref{SlidingSetsGeneral}, we will show that $f^v_S(\mathcal{F})$ is a finite set counting the numbers of intersections between certain pairs of cones, which also establishes that it is finite.
\end{rmk}
\subsection{Sliding sets when $\delta_S$ is a one-window symmetrized Minkowski weight}
One particular case of the sliding sets problem will be of special interest to us, namely when $\delta_S=\Phi_{r,k}$ is a one-window symmetrized Minkowski weight.
\begin{thm}
\label{thm:generalonewindow}
Let $S=\{1,\ldots,k,k+r+1,\ldots,n\}$, and suppose that $v_0>v_1>\cdots>v_n$. Then for $\mathcal{F}\in \mathcal{L}^{n+1}_{(r)}$, we have
\begin{align*}f^v_S(\mathcal{F})=|\{(x_0,\ldots,x_r)\in \prod_{i=0}^r F_{i+1}\setminus F_i: \sum_{i=0}^r |\{y\in F_{i+1}\setminus F_i:y<x_i\}|=k&\\\text{and }x_0>\cdots > x_r\}&|.\end{align*}
\end{thm}
\begin{proof}
The sliding sets problem has $r+1$ sets $H_i=\{v_j\}_{j\in F_{i+1}\setminus F_{i}}$ for $0\le i \le r$, and we want to achieve multiplicities
$$\operatorname{mult}_{\rightarrow}(\bigcup (H_i+t_i))=(\underbrace{1,\ldots,1}_{n-r-k},r+1,\underbrace{1,\ldots,1}_{k}).$$
Because for fixed $i$ no two elements of $H_i+t_i$ can be equal, there must be one point in each $H_i+t_i$ which is part of this group of multiplicity $r+1$. The genericity of $v$ ensures that no further overlaps occur, and if $x_i\in F_{i+1}\setminus F_{i}$ are the chosen points so that $v_{x_0}+t_0=v_{x_1}+t_1=\cdots=v_{x_{r}}+t_{r}$, then the number of points in $\bigcup (H_i+t_i)$ beyond the point of multiplicity $r+1$ is exactly $\sum_{i=0}^{r}|\{y \in F_{i+1}\setminus F_{i}: v_y > v_{x_i}\}|=\sum_{i=0}^{r}|\{y \in F_{i+1}\setminus F_{i}: y < x_i\}|$. Because $v_{x_0}+t_0=\cdots=v_{x_{r}}+t_{r}$, the condition $x_0>\cdots>x_{r}$, or equivalently, $v_{x_0}<\cdots <v_{x_{r}}$ is precisely the condition that $t_0>t_1>\cdots >t_{r}$.
\end{proof}

\section{Intersections with Symmetrized Minkowski weights via sliding sets}
\label{SlidingSetsGeneral}
In this section we show how the sliding sets problem can be used to compute intersections with symmetrized Minkowski weights. 
As in the previous section, let $S=\{s_1<\cdots<s_{n-r}\}$ (we set $s_0=0$ and $s_{n-r+1}=n+1$ by convention in what follows) and $\Psi\in MW^{n-r}(\Delta_{n+1})$ be an arbitrary Minkowski weight.

\begin{thm}
For a generic vector $v\in N_{\mathbb{R}}$, we have $$\deg(\delta_S\cdot \Psi)=\sum_{\mathcal{F}\in \mathcal{L}^{n+1}_{(r)}}f_S^v(\mathcal{F})\Psi(\sigma_{\mathcal{F}}).$$
\end{thm}

\begin{proof}
By the fan displacement rule and \Cref{lem:mult1}, we have
\begin{align*}
    \deg(\delta_S \cdot \Psi)&=\sum_{(\sigma_1,\sigma_2)\in \Delta_{n+1,(n-r)}\times \Delta_{n+1,(r)}}m^{\{0\},v}_{\sigma_1,\sigma_2}\delta_{S}(\sigma_1)\Psi(\sigma_2)\\
    &=\sum_{\mathcal{F}\in \mathcal{L}^{n+1}_{(r)}}\left(\sum_{\mathcal{G}\in \mathcal{L}^{n+1}_{(n-r),S}} m^{\{0\},v}_{\sigma_{\mathcal{G}},\sigma_{\mathcal{F}}}\right)\Psi(\sigma_{\mathcal{F}})\\
    &=\sum_{\mathcal{F}\in \mathcal{L}^{n+1}_{(r)}}|\{\mathcal{G}\in \mathcal{L}^{n+1}_{(n-r),S}: (\sigma_{\mathcal{F}}+v) \cap \sigma_{\mathcal{G}} \ne \emptyset\}| \Psi(\sigma_{\mathcal{F}})
\end{align*}
Hence it suffices to show that
$$f^v_S(\mathcal{F})=|\{\mathcal{G}\in \mathcal{L}^{n+1}_{(n-r),S}: (\sigma_{\mathcal{F}}+v) \cap \sigma_{\mathcal{G}} \ne \emptyset\}|.$$
By the genericity of $v$, we may assume that all intersections happen in the relative interiors $\sigma_{\mathcal{G}}^{\circ}=\{\sum_{i=0}^{n-r}x_ie_{G_{i+1}\setminus G_i}:x_0\ge \cdots \ge x_{n-r}=0\}\subset \sigma_{\mathcal{G}}$. Let $\sigma'_{\mathcal{F}}=\{\sum_{i=0}^{r}t_ie_{F_{i+1}\setminus F_i}:t_0\ge \cdots \ge t_r=0\}\subset \mathbb{R}^{n+1}$ and $\sigma'_{\mathcal{G}}=\{\sum_{i=0}^{n-r}x_ie_{G_{i+1}\setminus G_i}:x_0\ge \cdots \ge x_{n-r}=0\}\subset \mathbb{R}^{n+1}$, so that under the quotient map $\mathbb{R}^{n+1}\to N_{\mathbb{R}}$ we have $\sigma'_{\mathcal{F}},\sigma'_{\mathcal{G}}$ biject to $\sigma_{\mathcal{F}}$ and $\sigma_{\mathcal{G}}$. Then elements of intersections $(\sigma_{\mathcal{F}}+v)\cap \sigma_{\mathcal{G}}^{\circ}$ are in bijection with elements of intersections $(\sigma_{\mathcal{F}}'+v)\cap ((\sigma_{\mathcal{G}}')^{\circ}+\mathbb{R}\langle(1,\ldots,1)\rangle)$. Because the sets $(\sigma_{\mathcal{G}}')^{\circ}+\mathbb{R}\langle(1,\ldots,1)\rangle$ are disjoint, we have
$$|\{\mathcal{G}\in \mathcal{L}^{n+1}_{(n-r),S}: (\sigma_{\mathcal{F}}+v) \cap \sigma_{\mathcal{G}} \ne \emptyset\}|=|\sigma_{\mathcal{F}}'\cap (\bigcup_{\mathcal{G}\in \mathcal{L}^{n+1}_{(n-r),s}}(\sigma_{\mathcal{G}}')^{\circ}+\mathbb{R}\langle(1,\ldots,1)\rangle)|.$$
Write $\chi(x_0,\ldots,x_n)\in \operatorname{Mult}_{n+1}(\mathbb{R})$ for the multiset associated to the sequence $x_0,\ldots,x_n$, and for $A\subset \mathbb{R}^{n+1}$ denote $\chi(A):=\{\chi(x):x\in A\}$. Then
$\chi(\bigcup_{\mathcal{G}\in \mathcal{L}^{n+1}_{(n-r),s}}(\sigma_{\mathcal{G}}')^{\circ}+\mathbb{R}\langle(1,\ldots,1)\rangle )$ is the set of all $(x_0,\ldots,x_{n})$ such that $$\chi(x_0,\ldots,x_n)=(s_{n-r+1}-s_{n-r},\ldots,s_1-s_0),$$
and
$\chi(\sigma_{\mathcal{F}})=\{\bigcup(H_i+t_i):t_0\ge \cdots \ge t_r=0\}$. Thus intersection points correspond to solutions to sliding sets problems. The correspondence is injective because $\chi|_{\sigma'_{\mathcal{F}}}$ is injective, and the correspondence is surjective by applying the identity $\chi(A\cap \chi^{-1}(B))=\chi(A)\cap B$ with $A=\sigma_\mathcal{F}'$ and $B=\bigcup (\sigma_{\mathcal{G}'})^{\circ}+\mathbb{R}\langle (1,\ldots,1)\rangle$.
\end{proof}
\begin{cor}
\label{fsvcor}
For intersecting the Bergman fan Minkowski weight $[\Delta_M]\in MW^{n-r}(\Delta_{n+1})$ and a symmetrized Minkowski weight $\delta_S\in MW^r(\Delta_{n+1})$ with $|S|=n-r$, we have the degree computation
$$\deg([\Delta_M]\cdot \delta_S)=\sum_{\mathcal{F}\in \mathcal{L}^{M}_{(r)}}f_S^v(\mathcal{F}).$$
\end{cor}
\begin{proof}
This follows from the above theorem and the definition of $[\Delta_M]\in MW^{n-r}(\Delta_{n+1})$ from \Cref{defn:BergmanMinkowski}.
\end{proof}

\section{Tutte Polynomials and one-window symmetrized Minkowski weights}
\label{PhirkSection}
Let $M$ be a loopless matroid of rank $r+1$ on $\{0,\dots,n\}$. In this section, we show that the generating function of $\deg([\Delta_M]\cdot \Phi_{r,k})$ is precisely $T_M(1,y)$. To do this, we recall that in \Cref{inexTutte} we had the explicit formula
$$T_M(1,y) = \sum_B y^{|\operatorname{ex}(B)|},$$
  the sum over bases $B$ of $M$.
\begin{thm}\label{mainphithm} For $M$ a loopless matroid of rank $r+1$ on $\{0,\ldots,n\}$ we have
$$T_M(1,y)=\sum_{k=0}^{n-r} \deg([\Delta_M]\cdot \Phi_{r,k})y^k.$$
\end{thm}

\begin{proof}
By \Cref{thm:generalonewindow} and \Cref{fsvcor}, we have
$\deg([\Delta_M]\cdot \Phi_{r,k})=\sum_{\mathcal{F}\in \mathcal{L}^{M}_{(r),S}}f_k(\mathcal{F})$
where
\begin{align*}f_k(\mathcal{F})=|\{(x_0,\ldots,x_r)\in \prod_{i=0}^r F_{i+1}\setminus F_i: \sum_{i=0}^r |\{y\in F_{i+1}\setminus F_i:y<x_i\}|=k&\\\text{and }x_0>\cdots > x_r\}&|.\end{align*}
We claim that there is a natural bijection between the $(r+1)$-tuples counted by the various $f_k(\mathcal{F})$ and bases of $M$ with external activity $k$.

First, we claim that $(x_0,\ldots,x_r)$ is a basis. Indeed, $\rk_M(\{x_0\})=1$ since $M$ is loopless, and $i+1\ge \rk_M(\{x_0,\ldots,x_{i+1}\})>\rk_M(\{x_0,\ldots,x_{i}\})$ since $x_{i+1}\not \in F_{i+1} \supset \{x_0,\ldots,x_i\}$, so by induction we have $\rk_M(\{x_0,\ldots,x_i\})=i+1$ and conclude by setting $i=r$.

Now, note that $(x_0,\ldots,x_{r})$ determines $\mathcal{F}$ because $F_{i+1}=\overline{\{x_0,\ldots,x_{i}\}}$ (as $F_{i+1}$ contains $\{x_0,\ldots,x_{i}\}$ and they have the same rank $i+1$). Therefore all of the $(r+1)$-tuples for the various $\mathcal{F}$ are disjoint. Furthermore, the unordered bases $\{x_0,\ldots,x_{r}\}$ are distinct since only one ordering has $x_0>\cdots>x_{r}$. We claim that $\mathcal{B}=\{x_0,\ldots,x_{r}\}$ has external activity $k$. Indeed, for $y \in F_{i+1}\setminus F_{i}$, if $\mathcal{B}-x_j\cup y$ is a basis then $j\le i$, as otherwise the $i+2$ elements $x_0,\ldots,x_{i},y$ of this basis lie in the rank $i+1$ flat $F_{i+1}$. Therefore, it suffices to check the external activity condition for $y$ with respect to $x_0,\ldots, x_{i}$. If $y<x_{i}$, then since $x_{i}<x_{i-1}<\cdots<x_0$, there are no more basis elements to check the condition with respect to, so $y$ is externally active. Conversely, if $y>x_{i}$, then we claim that $\mathcal{B}-x_i\cup y$ is a basis so $y$ is not externally active. We will do this by showing $F_{r+1}=\overline{\mathcal{B}-x_i\cup y}$. Indeed, $F_i\supset \overline{\{x_0,\ldots,x_{i-1},y\}}\supsetneq F_{i-1}$ so $F_i=\overline{\{x_0,\ldots,x_{i-1},y\}}$, and hence for $j\ge i$ we inductively conclude $F_j=\overline{F_{j-1}\cup x_j}=\overline{\{x_0,\ldots,x_{i-1},y,x_{i+1},\ldots,x_j\}}$. The result follows.
\end{proof}
\begin{thm}(\Cref{thm:mixedintro})\label{gammathm}
For $r_1+\ldots+r_k=r$ and $r_i\ge 1$ we have
$$\sum_{i=0}^{n-k}\deg([\Delta_M]\cdot \gamma_{1+i}^{r_1}\ldots \gamma_{k+i}^{r_k})y^i=T_M(1,y)A_{r_1,\ldots,r_k}(y).$$
\end{thm}
\begin{proof}
This follows by combining the above \Cref{mainphithm} with \Cref{alphapower}.
\end{proof}

\section{Log concavity statements}
\label{LogConcavitySection}
Let $M$ be a loopless rank $r+1$ matroid on $\{0,\dots,n\}$. 
In this section we deduce a strengthening of Dawson's conjecture \cite{Dawson} on the log concavity of the coefficients of $T_M(1,y)$, or equivalently by \Cref{fact:Dawsonfact} the $h$-vector of the independence complex of the dual matroid $M^*$. We note that since we are working with the $h$-vector of the dual matroid $M^*$, and the $h$-vector of a matroid is insensitive to the presence of coloops, our assumption that $M$ has no loops is made without loss of generality in what follows.

\begin{thm}\label{thm:stronglc}(Equivalent reformulation of \Cref{preintrothm} for $M^*$ by \Cref{fact:Dawsonfact})
For three consecutive coefficients $a,b,c$ of $T_M(1,y)$ we have $$r(b^2-ac)+(b-a)(b-c) \ge 0,\text{ and }b^2\ge ac.$$
\end{thm}
\begin{proof}
Recall that by \Cref{cor:nointernal}, the nonnegative sequence $a,b,c$ has no internal zeros.

If $\rk(M)\le 1$ then $T_M(1,y)=1+\cdots+y^{n}$, so the inequalities are trivial. If $\rk(M)=2$ (i.e. $r=1$), one can check directly that $b \ge \frac{a+c}{2}$ (equivalent to the first inequality) and in particular $b^2\ge ac$ by the arithmetic-geometric mean inequality, so assume now that $r \ge 2$.

\begin{clm}\label{cor:lc}
For a sequence of three nonnegative numbers $a,b,c$ with no internal zeros, if $r(b^2-ac)+(b-a)(b-c)\ge 0$ for some $r\ge 1$ then $b^2\ge ac$.
\end{clm}
\begin{proof}
Without loss of generality, suppose $c\ge a$. If $b\ge c$ then $b^2-ac\ge ac-ac=0$. If $a\le b \le c$ then $r(b^2-ac)\ge (b-a)(c-b)\ge 0$. If $0<b<a$ then we have the contradiction $b^2-ac < b(a+c-b)-ac=(b-a)(c-b)\le r(b-a)(c-b)$. The result follows.
\end{proof}
Hence, it suffices to check the first inequality displayed in the theorem. This inequality is equivalent to $$((r-1)a+b)((r-1)c+b) \le (rb)^2.$$ 
Consider the product of $r-2$ consecutive $\gamma$ classes $\Psi=\gamma_{\ell+1}\ldots \gamma_{\ell+r-2}$. Then we can compute
\begin{align*}
\deg([\Delta_M]\cdot \gamma_\ell^2\Psi)&=[y^{\ell-1}]T_M(1,y)A_{2,1^{r-2}}(y)=(r-1)!((r-1)c+b),\\
\deg([\Delta_M]\cdot \gamma_{\ell}\gamma_{\ell+r-1}\Psi)&=[y^\ell]T_M(1,y)A_{1^r}(y)=r!b,\\
\deg([\Delta_M]\cdot \gamma_{\ell+r-1}^2\Psi)&=[y^{\ell+1}]T_M(1,y)A_{1^{r-2},2}(y)=(r-1)!((r-1)a+b),\end{align*}
using \Cref{gammathm} where $a,b,c$ are the $y^{\ell},y^{\ell-1},y^{\ell-2}$ coefficients of $T_M(1,y)$ respectively. Here we have evaluated the mixed Eulerian polynomials
\begin{align*}
   A_{2,1^{r-2}}(y)&=A_{2,1^{r-2},0} +A_{0,2,1^{r-2}} y=(r-1)!+(r-1)(r-1)!y,\\
   A_{1^r}(y)&=A_{1^r}=r!,\\
   A_{1^{r-2},2}(y)&=A_{1^{r-2},2,0}+A_{0,1^{r-2},2}y=(r-1)(r-1)!+(r-1)!y,
\end{align*}
using, e.g., \Cref{thm:mixedEuleriangenerating}. The result now follows from \Cref{thm:MinkAHK}.
\end{proof}
\section{Schur polynomials and the reliability polynomial of $M$}
\label{KlyachkoAppendix}
In this section we consider certain Schubert and Schur polynomials in the Chow ring of the permutohedron. Using the results of \cite{K85} we are able to connect the products of these Schur polynomials with the Bergman class of a matroid to the reliability polynomial of the matroid.

Let $t_i=(i,i+1) \in S_{n+1}$ be an adjacent transposition, $1 \leq i \leq n$,  and recall the \emph{length} $\ell(\sigma)$ of a permutation $\sigma \in S_{n+1}$ is the smallest number $r$ such that we can write $\sigma=t_{i_1}\ldots t_{i_r}$ for adjacent transpositions $t_{i_j}$. Let $\text{RW}(\sigma)$ be the set of such tuples $(i_1,\ldots,i_r)$ for a given $\sigma$. There is a unique longest permutation $\sigma_0$ which reverses the numbers $1,\ldots,n+1$. Recall that to each $\sigma\in S_{n+1}$ there is an associated \emph{Schubert polynomial}, determined by the relations
\begin{align*}
    f_{\sigma_0}(x_1,\ldots,x_{n+1})&=x_1^nx_2^{n-1}\ldots x_{n}\\
    \frac{f_\sigma(x_1,\ldots,x_{n+1})-t_i f_\sigma(x_1,\ldots,x_{n+1})}{x_{i}-x_{i+1}}&=\begin{cases}f_{t_i \sigma } & \ell(t_i \sigma)<\ell(\sigma)\\0 & \ell(t_i \sigma)>\ell(\sigma).\end{cases}
\end{align*}
\begin{defn}
Let $\alpha_i\in A^1(U_{n+1})$ be the element corresponding to $\gamma_{n+1-i}\in MW^1(\Delta_{n+1})$ for $i=1,\ldots,n$ under the isomorphism $A^\bullet(U_{n+1})\cong MW^\bullet(\Delta_{n+1})$.
\end{defn}
In \cite{K85}, Klyachko considered the following evaluations of the Schubert polynomials,
$$[P_{\sigma}]=f_{\sigma}(\alpha_1,\alpha_2-\alpha_1,\ldots,\alpha_{n}-\alpha_{n-1},-\alpha_n)\in A^{\ell(\sigma)}(U_{n+1}).$$
As noted in \cite{K85}, a general torus-orbit closure in the complete flag variety $GL_{n+1}/B$ is isomorphic to the toric variety $X(\Delta_{n+1})$   defined by the fan $\Delta_{n+1}$, and $[P_{\sigma}]\in A^\bullet(X(\Delta_{n+1}))=A^\bullet(U_{n+1})$ is the Chow class of the restriction of the Schubert variety $P_{\sigma}\subset GL_{n+1}/B$ to $X(\Delta_{n+1})$.

We will consider the following question.
\begin{quest}
\label{quest:Schubert}
For $M$ a loopless rank $r+1$ matroid on $\{0,\ldots,n\}$, and $\sigma\in S_{n+1}$ a permutation with $\ell(\sigma)=r$, what is $\deg([P_{\sigma}]\cdot [\Delta_M])$? Equivalently, what is the degree of the image of $[P_\sigma]$ in $A^r(M)$?
\end{quest}
\begin{rmk}
\label{rmk:WM}
If $M$ is realized by a vector configuration over $\mathbb{C}$ given by the columns of a full row rank $(r+1)\times (n+1)$ matrix, then associated to the rowspan $L\subset \mathbb{C}^{n+1}$  is a subvariety $W_L\subset X(\Delta_{n+1})$ (a \textit{wonderful compactification}; see \cite{DCP}) with $[W_L]=[\Delta_M]$ and $A^\bullet(W_L)=A^\bullet(M)$. Hence, in the realizable case the degree $\deg([P_{\sigma}]\cdot [\Delta_M])$ can be interpreted as the intersection number of $W_L\subset GL_{n+1}/B$ with the Schubert variety $P_{\sigma}$.
\end{rmk}

We first recall a formula of Klyachko for $[P_\sigma]$.
\begin{prop}[\cite{K85} Theorem 4]\label{prop:Kly1} For $\sigma\in S_{n+1}$ with $\ell(\sigma)=r$ we have
$$[P_\sigma]=\frac{1}{r!}\sum_{(i_1,\ldots,i_r)\in \text{RW}(\sigma)}\alpha_{i_1}\cdots \alpha_{i_r}\in A^r(U_{n+1}).$$
\end{prop}
This formula implies that $[P_{\sigma}]$ is always a nonnegative Minkowski weight, so in particular $\deg([P_{\sigma}]\cdot [\Delta_M])\ge 0$  whenever $\ell(\sigma)=r$, regardless of whether $M$ is realizable.
From this formula, we can also show that the symmetrized Minkowski weights $\delta_S$ are Schubert classes themselves.
\begin{cor}
\label{cor:deltaschubert}
For $S=\{s_1<\cdots<s_{n-r}\}\subset \{1,\ldots,n\}$ we have  $\delta_S=[P_{\sigma}]\in A^r(U_{n+1})$ for $\sigma=\sigma_1\sigma_2\ldots \sigma_{n-r+1}$ where $\sigma_i$ is the forward cycle permutation of $ (s_{i-1}+1,\ldots,s_{i})$, where we set $s_0=0$ and $s_{n-r+1}=n+1$.
\end{cor}
\begin{proof}
This follows immediately from the formula for $\delta_S$ from \Cref{cor:deltaSasprodofalphai} and  \Cref{prop:Kly1}. Indeed the permutation $\sigma_i$ has length $s_i - s_{i-1}-1$ and only one reduced word. Since the $\sigma_i$ commute the reduced words for $\sigma$ are obtained by shuffling, in order, the reduced words for the $\sigma_i$.
\end{proof}
A permutation $\sigma\in S_{n+1}$ is called $p$-Grassmannian $\sigma(1)<\cdots<\sigma(p)$ and $\sigma(p+1)<\cdots<\sigma(n+1)$. Klyachko also has a more specific formula when $\sigma$ is $p$-Grassmannian.

\begin{prop}[\cite{K85} Theorems 5,6]
\label{prop:Kly2}
 Fix a partition $\lambda$ of $r$ into at most $p$ parts, and let $f_\lambda$ denote the corresponding Schur polynomial. Let $\sigma_{\lambda,p}$ be the associated $p$-Grassmannian permutation. Then,
$$[P_{\sigma_{\lambda,p}}]=\prod_{(i,j)\in \lambda}\frac{\alpha_{p-i+j}}{h_{ij}}=\frac{1}{r!}\sum_{1\le k \le r}m_k(\lambda)\prod_{i=p-k+1}^{p-k+r}\alpha_i\in A^r(U_{n+1})$$
where $h_{ij}$ is the hook length of the cell $(i,j) \in \lambda$ and $m_k(\lambda)$ is defined by the equality
$$\sum_{k=1}^{\infty}m_k(\lambda)y^k=(1-y)^{r+1}\sum_{i=1}^{\infty}f_\lambda(1^i)y^i.$$
\end{prop}
Note that by \Cref{thm:mixedEuleriangenerating} and the hook-content formula, the generating function for $m_k(\lambda)$ is a finite sum, and a multiple of the generating function for certain mixed Eulerian numbers. In private communiation, Vasu Tewari informed us that $m_k(\lambda)$ also counts semistandard fillings of the partition $\lambda$ with $k$ descents.

From this formula, we note an interesting connection between Schur polynomials and the reliability polynomial $R_M(y)=(1-y)^{r+1}y^{n-r}T_M(1,y^{-1})$.
\begin{thm}
For $\lambda$ a partition of $r$ into at most $p$ parts, we have
$$\sum \deg([\Delta_M]\cdot [P_{\sigma_{\lambda,p}}])y^p=R_M(y)\sum_{i=1}^\infty f_\lambda(1^i)y^i.$$
\end{thm}
\begin{proof}
We consider the equality
$$[P_{\sigma_{\lambda,p}}]=\frac{1}{r!}\sum_{1 \le k \le r}m_k(\lambda)\prod_{i=p-k+1}^{p-k+r} \alpha_i$$ from \Cref{prop:Kly2}.
We claim that $m_k(\lambda)=0$ for $k\ge r+1$. Indeed, for any power series $h(y)$ one can show by induction if
$[y^k](1-y)^{k}h(y)=0$
for all $k \ge r+1$ then
$[y^k](1-y)^{r+1}h(y)=0$ for all $k \ge r+1$. Recall that $f_\lambda(1^i)$ is the number of semistandard fillings of $\lambda$ with $\{1,\ldots,i\}$, or equivalently any collection of $i$ numbers. Taking $h(y)=\sum f_\lambda(1^i)y^i$, we can interpret $[y^{k}](1-y)^{k}h(y)$ as an inclusion-exclusion counting the number of semistandard tableux of shape $\lambda$ with $k$ numbers such that each number appears at least once, which is obviously $0$ when $k$ exceeds the size $r$ of $\lambda$.

We also have $\deg(\frac{1}{r!}\prod_{i=p-k+1}^{p-k+r}\alpha_i\cdot [\Delta_M])=[y^{n-p+k-r}]T_M(1,y)$ by \Cref{thm:mixedintro}. The result now follows.
\end{proof}
We note the following way to package these right hand sides for different $\lambda$.
\begin{cor}
With $x_1,\ldots,x_{n+1-p}$ indeterminates, we have
\begin{align*}&\sum_{\lambda}\deg([P_{\sigma_{\lambda,p}}]\cdot [\Delta_M])f_{\lambda}(x_1,\ldots,x_{n+1-p})\\=&[y^p]R_M(y)\frac{(y\prod(1+x_i))^{p+1}-(\prod x_i)^{p+1}}{y\prod(1+x_i)-\prod x_i},\end{align*}
where the sum is over all partitions $\lambda$ of $r$ with at most $p$ parts, and $\lambda^{\dagger}\subset p\times (n+1-p)$ is the complementary partition (so that $\lambda$ and $\lambda^{\dagger}$ rotated by $\pi$ and translated exactly fill a $p\times (n+1-p)$ box).
\end{cor}
\begin{proof}
 Specializing $z_j\in \{0,1\}$ in the Cauchy identity $$\sum_\lambda f_{\lambda^{\dagger}}(z_1,\ldots,z_p)f_\lambda(x_1,\ldots,x_{n+1-p}) =\prod_{i=1}^{n+1-p}\prod_{j=1}^{p} (x_i+z_j),$$
 we have for $0\le i\le p$
$$\sum_\lambda f_{\lambda^{\dagger}}(1^i)f_\lambda(x_1,\ldots,x_{n+1-p})=(\prod_{i=1}^{n+1-p} x_{i})^{p-i}(\prod_{i=1}^{n+1-p} (1+x_i))^{i}.$$
Therefore
\begin{align*}&\sum_{\lambda}\deg([P_{\sigma_{\lambda,p}}]\cdot [\Delta_M])f_{\lambda}(x_1,\ldots,x_{n+1-p})\\=&[y^p]R_M(y)\sum_\lambda\sum_{i=1}^\infty f_{\lambda^{\dagger}}(1^i)f_\lambda(x_1,\ldots,x_{n+1-p})y^i\\
=&[y^p]R_M(y)\frac{(y\prod (1+x_i))^{p+1}-(\prod x_i)}{y\prod (1+x_i)-\prod x_i}.\end{align*}
\end{proof}
\begin{rmk}
When we are in the situation of \Cref{rmk:WM}, then taking $x_1,\ldots,x_{n+1-p}$ to be the Chern roots of the tautological quotient bundle of the Grassmannian $Gr(p,n+1)$ of $p$-dimensional subspaces of $\mathbb{C}^n$, and $\pi_{p}:GL_{n+1}/B\to Gr(p,n+1)$ the forgetful map, the left hand side is exactly $\pi_{p,*}([W_L])\in A^\bullet(Gr(p,n+1))$.
\end{rmk}

We conclude by making the following conjecture concerning these classes and log-concavity statements.
\begin{conj}
\label{conj:SchubertLogConc}
$M$ is a loopless rank $r+1$ matroid on ground set $\{0,\ldots,n\}$, if $\ell(\sigma)=r-2$ and $H,K\in MW^1(\Delta_{n+1})$ are divisors associated to generalized permutahedra, then
$$\deg([\Delta_M]\cdot [P_{\sigma}]\cdot H^2),\deg([\Delta_M]\cdot [P_{\sigma}]\cdot HK),\deg([\Delta_M]\cdot [P_{\sigma}]\cdot K^2)$$
is a log-concave sequence.
\end{conj}
We note some special cases where this is true.
\begin{thm}
The conjecture is true when
\begin{enumerate}
    \item $\sigma$ is a Grassmannian permutation.
    \item $\sigma=\sigma_1\cdots \sigma_{r-1}$ where $\sigma_i$ is the forward cycle permutation of $(s_{i-1}+1,\ldots,s_i)$ for a sequence $\{s_1<\cdots < s_{r-2}\}\subset \{1,\ldots,n\}$, where we set by convention $s_0=0$ and $s_{r-1}=n+1$.
\end{enumerate}
\end{thm}
\begin{proof}
In the first case $P_\sigma$ is a constant multiple of a product of $\gamma_i$ classes by \Cref{prop:Kly2}, and in the second case $P_{\sigma}$ is a constant multiple of a product of $\gamma_i$ classes by  \Cref{cor:deltaschubert} and \Cref{cor:deltaSasprodofalphai}, so the result follows from \Cref{thm:MinkAHK}
\end{proof}
\appendix
\section{Recursively computing the degree of $\deg_{A^\bullet(M)}(\delta_S)$}
\label{RecursionAppendix}
In this section, we show how to take a symmetrized Minkowski weight of a matroid on ground set $E=\{0,\ldots,n\}$ and compute its degree via a generalized deletion-contraction type recurrence. For positive integers $m_0,\ldots,m_{\crk(M)}$ with $m_0+\ldots+m_{\crk(M)}=|E|$, let $g_M(m_0,\ldots,m_{\crk(M)})=\deg_{A^\bullet(M)}(\delta_S)$ where $|S|=\crk(M)$ has $s_i=\sum_{j=0}^{i-1}m_j$ for $1\le i \le \crk(M)$, i.e. so that the grouping associated to the sliding sets problem has
$$\operatorname{mult}_{\to}(\bigcup_i (H_i+t_i))=(m_{\crk(M)},\ldots,m_0)$$
(note that this is in the reverse order to the order the $m_i$ appear in $g_M$).
Consider the $\rk(M)-1$ polynomial in $\crk(M)+1$ variables $$h_M=h_M(z_0,\ldots,z_{\crk(M)})=\sum_S g_M(S)z_0^{m_0-1}z_1^{m_1-1}\ldots z_{\crk(M)}^{m_{\crk(M)}-1}.$$ Denote by $\mathcal{T}$ the operation on a polynomial in  variables $z_i$ which replaces all $z_i$ with $z_{i+1}$.
\begin{thm}
For $n$ not a coloop of $M$, we have
\begin{align*}
    h_M=h_{M\setminus n}&+1_{\text{\{n\} a flat}}h_{(M/\{n\})\cup \{n\}}\\&+\sum_{\substack{n\in F\text{ proper flat}\\n\text{ not a coloop in }M|_{F}}}h_{(M|_{F}\setminus n)}\mathcal{T}^{\crk(M|_{F})}h_{(M/F)\cup \{n\}},
\end{align*}
and for $n$ a coloop of $M$ we have
$h_{M}=(z_0+\ldots+z_{\crk(M)})h_{M/\{n\}}.$ Here for $M'$ a matroid on $\{0,\ldots,n-1\}$ we are denoting by $M'\sqcup \{n\}$ for the matroid where $n$ is adjoined to $M'$ as a coloop, i.e. the matroid on $\{0,\ldots,n\}$ with flats $\mathcal{L}^{M'\sqcup \{n\}}=\mathcal{L}^M\sqcup \{F\cup \{n\}:F\in \mathcal{L}^M\}$.
\end{thm}
\begin{rmk}
The base case of this recursion is $h_{U_{1,1}}=1$, but we also note that we have $h_{U_{1,k}}(z_1,\ldots,z_k)=1$ and $[z_j^{\rk(M)-1}]h_{M}(z_1,\ldots,z_{\crk(M)+1})=[y^{j-1}]T_M(1,y)$.
\end{rmk}
\begin{proof}
We consider the enumerations provided by \Cref{fsvcor}, and take initial vector $v=(v_0,\ldots,v_n)$ with $v_n$ minimal such that $L=\min\{v_0,\ldots,v_{n-1}\}-v_n$ is significantly larger than $\max\{v_0,\ldots,v_{n-1}\}-\min\{v_0,\ldots,v_{n-1}\}$. Because $v$ only depends up to translation by a multiple of $(1,\ldots,1)$, we may assume that $\min\{v_0,\ldots,v_{n-1}\}=0$ (so that $v_n=-L$).

Recall that a hypergraph on a set $A$ is a family of subsets of $A$. The sets in the family are called hyperedges (or just edges), and a hypergraph is connected if there isn't a nontrivial partition $A=A_1\sqcup A_2$ with every hyperedge completely contained in either $A_1$ or $A_2$. By the genericity of $v$, for any $t_0>\ldots>t_r=0$ the sets $H_j+t_j$ in a sliding sets problem contributing to $h_M$ are the hyperedges of a simple hypertree (i.e. a connected hypergraph where two hyperedges intersect in at most one element, and the removal any hyperedge disconnects the hypergraph) by the same proof as for \Cref{prop:finitefs}.

For $\mathcal{F}\in\mathcal{L}^M_{(r)}$, we define the set of \emph{candidate hypertrees} $\operatorname{Cand}(\mathcal{F})$ to be all hypertrees obtained as $H_i(\mathcal{F})+t_i$ for real numbers $t_0,\ldots,t_{r-1}$ and $t_r=0$ (we do not impose an order on the $t_i$). Let $$\operatorname{Cand}(M)=\{(\mathcal{F},T):\mathcal{F}\in \mathcal{L}^m_{(r)}\text{ and }T\in \operatorname{Cand}(\mathcal{F})\}.$$

We say that $T\in \operatorname{Cand}(\mathcal{F})$ is \emph{good} if the induced $t_i$ have $t_0>\cdots>t_r=0$. Write $\operatorname{Good}(\mathcal{F})$ for the set of good hypertrees, and $\operatorname{Good}(M)$ for the subset of $\operatorname{Cand}(M)$ where we restrict $T\in \operatorname{Good}(\mathcal{F})$ for some $\mathcal{F}\in \mathcal{L}^M_{(r)}$. Our problem is now to enumerate the hypertrees in $\operatorname{Good}(M)$ and package the result into the generating function $h_M$.

Note that for $(\mathcal{F},T)\in \operatorname{Cand}(M)$, all hyperedges $H_j+t_j$ except for $H_i+t_i$ have diameter $\ll L$, and $H_i+t_i$ has diameter $0$ if $H_i=\{n\}$ and diameter $\approx L$ otherwise.
Therefore, we have the following cases.

In the first case, we consider the subset $\operatorname{Cand}_1(M)$ of those $(\mathcal{F},T)$ such that $\{n\}=F_{i+1}\setminus F_i$ for some $i$, and enumerate the subset $\operatorname{Good}_1(M)=\operatorname{Cand}_1(M)\cap \operatorname{Good}(M)$. We now construct a natural bijection
$$\operatorname{Good}_1(M)\cong \operatorname{Good}((M/n)\cup n).$$
For $(\mathcal{F},T)\in \operatorname{Cand}_1(M)$, we have $t_i \approx L$ and $t_j \ll L$ for all $i\ne j\in \{0,\ldots,r\}$. Hence for $(\mathcal{F},T)\in \operatorname{Good}_1(M)$, we therefore need $i=0$ in which case $\{n\}=F_1$ (so in particular $\{n\}$ is equal to a flat). We have an equality $$\{\mathcal{F}\in \mathcal{L}^M_{(r)}:F_1=\{n\}\}=\{\mathcal{F}'\in \mathcal{L}_{(r)}^{(M/n)\cup \{n\}}:F_1'=\{n\}\},$$
which implies $\operatorname{Good}_1(M)=\operatorname{Good}_1((M/n)\cup \{n\})$. However, every $\mathcal{F}'\in \mathcal{L}^{(M/\{n\})\cup \{n\}}_{(r)}$ has $\{n\}\in F_{i'+1}'\setminus F_{i'}'$ for some $i'$, so $\operatorname{Good}_1(M)=\operatorname{Good}((M/n)\cup \{n\})$. This contributes the term $1_{\{n\}\text{ a flat}}h_{(M/n)\cup \{n\}}$ to the recursion.

Before moving to the second case, we handle the case that $n$ is a coloop, and assume in the future that $n$ does not contain a coloop. Because $n$ is a coloop we have $M=(M/\{n\})\cup \{n\}$, so  by the above reasoning $\operatorname{Good}(M)$ only contains $(\mathcal{F},T)$ with $\{n\}=F_1$. As $n$ is a coloop, we have
$$\{\mathcal{F}'\in \mathcal{L}^{M/n}_{(r-1)}\}\cong \{\mathcal{F}\in \mathcal{L}_{(r)}^{M}:F_1=\{n\}\}$$ obtained by taking $\mathcal{F}'$ to the chain $\{n\}\subset F_1'\cup \{n\}\subset \cdots \subset F_{r-1}'\cup \{n\}$. An element of $(\mathcal{F},T)\in \operatorname{Good}(M)$ is therefore seen to be constructed from an element $(\mathcal{F}',T')\in \operatorname{Good}(M)$ by taking the associated chain $\mathcal{F}\in \mathcal{L}^M_{(r)}$ with $F_1=\{n\}$, and adding onto $T'$ the singleton edge $\{v_n+t_0\}$ on any vertex of $T'$. This always makes a good tree since all $t_j' \ll L$, while $t_0\approx L$ to have $v_n+t_0$ overlap a vertex of $T'$. This shows that $h_M=(z_0+\cdots+z_{\crk(M)})h_{M/\{n\}}$.

In the second case (recalling that we are assuming that $M$ has no coloop), we consider the subset $\operatorname{Cand}_2(M)$ of those $(\mathcal{F},T)$ such that for the $i$ with $n\in F_{i+1}\setminus F_i$, we have $|F_{i+1}\setminus F_i|\ge 2$, and $v_n+t_i$ lies only in $H_i+t_i$. We enumerate the subset $\operatorname{Good}_2(M)=\operatorname{Cand}_2(M)\cap \operatorname{Good}(M)$ by constructing a natural bijection
$$\operatorname{Good}_2(M)\cong \operatorname{Good}(M\setminus n).$$
For $(\mathcal{F},T)\in \operatorname{Cand}_2(M)$ we must have $t_j \approx 0$ for all $j$ including $j=i$, so in $T$ we have a vertex of multiplicity $1$ at $\approx -L$ (the translate of $v_n$), and all other vertices are $\approx 0$. As $n$ is not a coloop in $M$, we have a bijection $$ \mathcal{L}^{M\setminus n}_{(r)}\cong \{\mathcal{F}\in \mathcal{L}^M_{(r)}:\text{for the $i$ such that }n\in F_{i+1}\setminus F_i,\text{ we have }|F_{i+1}\setminus F_i|=2\}.$$ To see this, note that deleting $n$ from any flat in $M$ yields a flat in $M\setminus \{n\}$ by definition, and the condition $|F_{i+1}\setminus F_i|\ge 2$ ensures that all flats $F_i'$ in the resulting chain $\mathcal{F}$ are distinct. Conversely, given a maximal chain $\mathcal{F}'\in \mathcal{L}^{M\setminus (n)}_{(r)}$, each flat in the chain is already a flat in $M$ or becomes a flat after adding $n$. If $F_{i+1}'$ is the first flat which requires $n$, then all future flats require $n$, so $F_1'\subset \cdots \subset F_i'\subset F_{i+1}'\cup \{n\}\subset \cdots \subset F_r'\cup \{n\}$ is the chain of flats in $\mathcal{L}^M_{(r)}$. This induces a bijection
$\operatorname{Cand}_2(M)\cong \operatorname{Cand}(M\setminus n)$ which obviously preserves goodness, so we get a bijection $\operatorname{Good}_2(M)\cong \operatorname{Good}(M\setminus n)$.
Because the element $v_n+t_i$ that we omit under the bijection is always the leftmost vertex of $T$, this contributes $h_{M\setminus n}$ to the recursion.
 
Finally, we will consider $(\mathcal{F},T)\in \operatorname{Good}(M)\setminus (\operatorname{Good}_1(M)\sqcup \operatorname{Good}_2(M))$. The condition is then that for the $i$ with $n\in F_{i+1}\setminus F_i$, we have $|F_{i+1}\setminus F_i|\ge 2$ and $v_n+t_i$ is a vertex of multiplicity at least $2$ in $T$. In this case, we must have $t_i\approx L$, which then implies there is some proper subset $F\subset \{0,\ldots,r\}$ with $t_j\approx L$ for all $j\in F$, and $t_j\ll L$ for all $j\not \in F$. Since the set of all $j$ such that $t_j\ge L/2$ is a flat in $\mathcal{F}$, we conclude that $n\in F\in \mathcal{L}^M_{(1)}$ and $F\in \mathcal{F}$. Note also that if $n$ is a coloop in $F$ then writing $F=F_j$ we have $j\ge i+1$, so $F_{i+1}\setminus F_i=\{n\}$ (being the difference of two consecutive flats in the chain $F_1\subset\cdots\subset F_j=F$ from $\mathcal{L}^{M|_F}_{(\rk(F)-1)}$ containing a coloop), a contradiction. Hence we can partition
$$\operatorname{Good}(M)=\operatorname{Good}_1(M)\sqcup \operatorname{Good}_2(M)\sqcup \bigsqcup_{\substack{n\in F\in \mathcal{L}^M_{(1)}\\n\text{ not a coloop in }F}} \operatorname{Good}_F(M).$$
We claim that there is a bijection
$$\operatorname{Good}_F(M)\cong \operatorname{Good}((M/F) \cup \{n\})\times \operatorname{Good}((M|_F)\setminus \{n\}).$$
Suppose $\rk_M(F)=j$, so in any chain $F\in \mathcal{F}\in \mathcal{L}^M_{(r)}$ we have $F=F_j$. For $k=1,\ldots,j-1$ the sets $F_{k+1}\setminus F_{k}$ for $k=0,\ldots,j-1$ are the differences of consecutive flats in the chain of flats $F_1\subset \cdots\subset F_{j-1}$ in $\mathcal{L}^{M|_F}_{\rk(M|_F)-1}$. This gives a good hypertree for $M|_F \setminus \{n\}$ induced by the rightmost grouping in $\bigcup (H_j+t_j)$ clustered around $L$ (note we omit $n$ since $v_n+t_i$ itself is actually very close to $0$ since $v_n=-L$ and $t_i\approx L$). Next, the $F_{k+1}\setminus F_{k}$ for $k=j,\ldots,r$ are the differences of consecutive flats in the chain of flats $F_{j+1}\setminus F_j \subset \cdots \subset \{0,\ldots,n\}\setminus F_j$ in $\mathcal{L}^{M/F}_{(\rk(M/F)-1)}$, and we get a good hypertree in $M/F$ induced by the leftmost grouping (excluding $v_n+t_i$). Finally, $v_n+t_i$ can freely choose any element of the hypertree for $M/F$ to overlap (the different choices have the effect of shifting the entire $M|_F$ hypertree without changing the intersection pattern or goodness inequalities), and we can model this free choice as taking a good hypertree for $(M/F)\cup \{n\}$ (as was discussed in the coloop case above).

Note now that there is a bijection
$$\mathcal{L}^{M|_F\setminus \{n\}}_{\rk(F)-1}\times \mathcal{L}^{M/F}\cong\{\mathcal{F}\in \mathcal{L}^M_{(r)}: n\in F\text{ with $n$ not a coloop in $F$} \}.$$
obtained by composing the injection $\mathcal{L}_{\rk(F)-1}^{(M|_F) \setminus n}\hookrightarrow \mathcal{L}_{\rk(F)-1}^{M|_F}$ from the second case with the natural identification $\mathcal{L}^{M|_F}_{(\rk(M|_F)-1)}\times \mathcal{L}^{M/F}_{(\rk(M/F)-1)}\cong \{\mathcal{F}\in\mathcal{L}^M:F\in \mathcal{F}\}$ obtained by $(\mathcal{F}',\mathcal{F}'')$ to $\mathcal{F}'\sqcup \{F\}\sqcup \{F\sqcup F'':F''\in \mathcal{F}''\}$. Hence we do obtain every pair of trees in $\operatorname{Good}((M/F)\cup \{n\})\times \operatorname{Good}((M|_F)\setminus \{n\})$ in this way. This contributes $h_{M|_F\setminus n}\mathcal{T}^{\crk(M|_F)}h_{(M/F)\cup \{n\}}$ to the recursion.
\end{proof}
\begin{exmp}
We compute $g_M(2,2,1) = \deg_{A^\bullet(M)}(\delta_{\{2,4\}})$ when $M$ is the rank $3$ matroid realized by the $5$ vectors $v_0=e_1$, $v_1=e_2$, $v_2=e_3$, $v_3=e_1+e_2+e_3$ and $v_4=e_1+e_2$ in $\mathbb{C}^3$. Write $U_{a,b}$ for the matroid associated to a collection of $b$ vectors in an $a$-dimensional vector space with no linear dependencies among any $a-1$ vectors, and $\oplus$ for the matroid direct sum. We have $M\setminus \{4\}=U_{3,4}$, $M/\{4\}=U_{1,2}\oplus U_{1,2}$, and $4$ is not a coloop inside either of the proper flats $F_1=\{0,1,4\}$, $F_2=\{2,3,4\}$ strictly containing $4$, and we note that $M|_{F_1\setminus \{4\}}=M|_{F_2\setminus \{4\}}=U_{1,1}\oplus U_{1,1}$ and $M/F_1=M/F_2=U_{1,2}$.
We compute $h_{U_{3,4}}(z_0,z_1)=3z_0^2+3 z_0z_1+ z_1^2$, $h_{U_{1,2}\oplus U_{1,2}}(z_0,z_1,z_2)= z_0+2 z_1+ z_2$, $h_{U_{1,1}\oplus U_{1,1}}(z_0)=z_0$, and $h_{U_{1,2}}(z_1,z_2)=1$.
Therefore
\begin{align*}h_M&=h_{U_{3,4}}+(z_0+z_1+z_2)h_{U_{1,2}\oplus U_{1,2}}+2(z_1+z_2)h_{U_{1,1}\oplus U_{1,1}}\mathcal{T}h_{U_{1,2}}\\
&=3z_0^2+3z_0z_1+z_1^2+(z_0+z_1+z_2)(z_0+2z_1+z_2)+2z_0(z_1+z_2)\\
&=4z_0^2+3 z_1^2 +z_2^2 +8 z_0z_1+4z_0z_2+3z_1z_2.\end{align*}
Hence $g_M(2,2,1)=[z_0^1z_1^1z_2^0]h_M=8$.
\end{exmp}

\bibliographystyle{alpha} \bibliography{references.bib}

\newcommand{\etalchar}[1]{$^{#1}$}
\begin{thebibliography}{BHM{\etalchar{+}}20}

\bibitem[ADH20]{ADH20}
Federico Ardila, Graham Denham, and June Huh.
\newblock Lagrangian geometry of matroids.
\newblock {\em preprint}, 2020.
\newblock J. Amer. Math. Soc., to appear. arXiv:2004.13116.

\bibitem[AHK18]{AHK18}
Karim Adiprasito, June Huh, and Eric Katz.
\newblock Hodge theory for combinatorial geometries.
\newblock {\em Ann. of Math. (2)}, 188(2):381--452, 2018.

\bibitem[Ard15]{Ardila}
Federico Ardila.
\newblock Algebraic and geometric methods in enumerative combinatorics.
\newblock In {\em Handbook of enumerative combinatorics}, Discrete Math. Appl.
  (Boca Raton), pages 3--172. CRC Press, Boca Raton, FL, 2015.

\bibitem[Ard18]{ardilaGeometry}
Federico Ardila.
\newblock The geometry of matroids.
\newblock {\em Notices Amer. Math. Soc.}, 65(8):902--908, 2018.

\bibitem[BES19]{BES19}
Spencer Backman, Christopher Eur, and Connor Simpson.
\newblock Simplicial generation of chow rings of matroids.
\newblock {\em preprint}, 2019.
\newblock arXiv:1905.07114.

\bibitem[BEST21]{BEST}
Andrew Berget, Christopher Eur, Hunter Spink, and Dennis Tseng.
\newblock Tautological classes of matroids, 2021.

\bibitem[BHM{\etalchar{+}}20]{BHMPW20}
Tom Braden, June Huh, Jacob Matherne, Nicholas Proudfoot, and Botong Wang.
\newblock A semi-small decomposition of the chow ring of a matroid.
\newblock {\em preprint}, 2020.
\newblock arXiv:2002.03341.

\bibitem[Daw84]{Dawson}
Jeremy~E. Dawson.
\newblock A collection of sets related to the {T}utte polynomial of a matroid.
\newblock In {\em Graph theory, {S}ingapore 1983}, volume 1073 of {\em Lecture
  Notes in Math.}, pages 193--204. Springer, Berlin, 1984.

\bibitem[DCP95]{DCP}
C.~De~Concini and C.~Procesi.
\newblock Wonderful models of subspace arrangements.
\newblock {\em Selecta Math. (N.S.)}, 1(3):459--494, 1995.

\bibitem[DLKK12]{DLKK12}
Jes\'{u}s~A. De~Loera, Yvonne Kemper, and Steven Klee.
\newblock {$h$}-vectors of small matroid complexes.
\newblock {\em Electron. J. Combin.}, 19(1):Paper 14, 11, 2012.

\bibitem[ERS98]{Consecutive}
R.~Ehrenborg, M.~Readdy, and E.~Steingr\'{\i}msson.
\newblock Mixed volumes and slices of the cube.
\newblock {\em J. Combin. Theory Ser. A}, 81(1):121--126, 1998.

\bibitem[FMSS95]{FMSS95}
William Fulton, Robert MacPherson, F.~Sottile, and Bernd Sturmfels.
\newblock Intersection theory on spherical varieties.
\newblock {\em J. Algebr. Geom.}, 4(1):181--193, 1995.

\bibitem[FS97]{FS97}
William Fulton and Bernd Sturmfels.
\newblock Intersection theory on toric varieties.
\newblock {\em Topology}, 36(2):335--353, 1997.

\bibitem[Ful93]{F93}
William Fulton.
\newblock {\em Introduction to toric varieties}, volume 131 of {\em Annals of
  Mathematics Studies}.
\newblock Princeton University Press, Princeton, NJ, 1993.
\newblock The William H. Roever Lectures in Geometry.

\bibitem[FY04]{FY04}
Eva~Maria Feichtner and Sergey Yuzvinsky.
\newblock Chow rings of toric varieties defined by atomic lattices.
\newblock {\em Invent. Math.}, 155(3):515--536, 2004.

\bibitem[HK12]{HK12}
June Huh and Eric Katz.
\newblock Log-concavity of characteristic polynomials and the {B}ergman fan of
  matroids.
\newblock {\em Math. Ann.}, 354(3):1103--1116, 2012.

\bibitem[Huh14]{H14}
June Huh.
\newblock {\em Rota's conjecture and positivity of algebraic cycles in
  permutohedral varieties}.
\newblock ProQuest LLC, Ann Arbor, MI, 2014.
\newblock Thesis (Ph.D.)--University of Michigan.

\bibitem[Huh15]{HuhAdvances}
June Huh.
\newblock {$h$}-vectors of matroids and logarithmic concavity.
\newblock {\em Adv. Math.}, 270:49--59, 2015.

\bibitem[Kat16]{KatzSurvey}
Eric Katz.
\newblock Matroid theory for algebraic geometers.
\newblock In {\em Nonarchimedean and tropical geometry}, Simons Symp., pages
  435--517. Springer, [Cham], 2016.

\bibitem[Kly85]{K85}
A.~A. Klyachko.
\newblock Orbits of a maximal torus on a flag space.
\newblock {\em Funktsional. Anal. i Prilozhen.}, 19(1):77--78, 1985.

\bibitem[McM93]{M93}
Peter McMullen.
\newblock On simple polytopes.
\newblock {\em Invent. Math.}, 113(2):419--444, 1993.

\bibitem[MS15]{MS15}
Diane Maclagan and Bernd Sturmfels.
\newblock {\em Introduction to tropical geometry}, volume 161 of {\em Graduate
  Studies in Mathematics}.
\newblock American Mathematical Society, Providence, RI, 2015.

\bibitem[NT20]{NadeauTewari}
Philippe Nadeau and Vasu Tewari.
\newblock The permutahedral variety, mixed eulerian numbers, and principal
  specializations of schubert polynomials, 2020.

\bibitem[Pos09]{Postnikov}
Alexander Postnikov.
\newblock Permutohedra, associahedra, and beyond.
\newblock {\em Int. Math. Res. Not. IMRN}, (6):1026--1106, 2009.

\bibitem[Rot71]{R71}
Gian-Carlo Rota.
\newblock Combinatorial theory, old and new.
\newblock In {\em Actes du {C}ongr\`es {I}nternational des {M}ath\'{e}maticiens
  ({N}ice, 1970), {T}ome 3}, pages 229--233. 1971.

\bibitem[Tut54]{Tutte}
W.~T. Tutte.
\newblock A contribution to the theory of chromatic polynomials.
\newblock {\em Canad. J. Math.}, 6:80--91, 1954.

\end{thebibliography}

\end{document}